\newcommand{\mat}[1]{{\bf{#1}}}
\newcommand{\mathat}[1]{{\bf{\hat{#1}}}}
\newcommand{\matcal}[1]{{\boldsymbol{{\mathcal{#1}}}}}
\newcommand{\mattilde}[1]{\mat{\widetilde{#1}}}
\newcommand{\mats}[1]{{\boldsymbol{#1}}}
\newcommand{\mathats}[1]{{\boldsymbol{\hat{#1}}}}
\newenvironment{Smallmatrix}[1]
  {\arraycolsep=3pt\footnotesize
   \array{#1}}
  {\endarray}
\begin{document}
\headers{Obreshkov Order in DAE}{Emad Gad}
\title{Characterizing Order of Convergence in the Obreshkov Method in Differential-Algebraic Equations}
\author{Emad Gad \thanks{School of Electrical Engineering and Computer Science, Ottawa, ON, Canada K1N 5N6 (\email{egad@uottawa.ca} })
\funding{This work was supported by the Natural Sciences and Engineering Research Council (NSERC) of Canada.}}
\maketitle

\begin{abstract}
The Obreshkov method is a single-step multi-derivative method used in the numerical solution of differential equations and has been used in recent years in efficient circuit simulation. It has been shown that it can be made of arbitrary high local order of convergence while maintaining unconditional numerical stability. Nevertheless, the theoretical basis for the high order of convergence has been known only for the special case where the underlying system of differential equations is of the ordinary type, i.e., for ordinary differential equations (ODE). On the other hand, theoretical analysis of the order of convergence for the more general case of a system consisting of differential and algebraic equations (DAE) is still lacking in the literature. 

This paper presents the theoretical characterization for the local order of convergence of the Obreshkov method when used in the numerical solution of a system of DAE. The contribution presented in this paper demonstrates that, in DAE, the local order of convergence is a function of the differentiation index of the system and, under certain conditions, becomes lower than the order obtained in ODE.  

\end{abstract}

\begin{keywords}
  Numerical Methods for Differential Equations, High-Order Approximation, \\Differential-Algebraic Equations, $A$-stability and $L$-stability, The Differentiation Index of Differential Algebraic Equations.
\end{keywords}

\begin{AMS}
  65D30, 68U20, 65C20
\end{AMS}
\section{Introduction}
\label{sec:introduction}
Numerical solution of differential equations (DEs) is one of the fundamental tools used in all walks of applied and computational sciences. The main goal in any method used to solve a system of DEs of the form 
\begin{equation}
\label{eq:DAE-nonlinear}
\mat{f}\left(\mat{x}(t),\frac{\textnormal{d}\mat{x}(t)}{\textnormal{d}t},t\right) = \mat{0},
\end{equation}
(with $\mat{x}(t)\in\mathbb{R}^N, \mat{f}: \mathbb{R}^N \times
\mathbb{R}^N \times \mathbb{R} \rightarrow \mathbb{R}^N$) is to
approximate $\mat{x}(t)$ at  time points, $t_n,n=0,1,2\cdots$. The process of computing approximations to $\mat{x}(t_i)$ is typically known as time marching, and starts from an initial point, $t_0$,where $\mat{x}(t_0)$ is known. In many situations, it is possible that the derivatives $\frac{\textnormal{d}\mat{x}(t)}{\textnormal{d}t}$ can be expressed explicitly in terms of $\mat{x}(t)$ and $t$, in which case the system of DEs is said to be in the ordinary differential equations (ODE) form. If that is not possible, the DEs are referred to as a system of differential-algebraic equations (DAE).

Research activities targeting developing numerical methods for solving DEs span over several decades and are still going strong. Section \ref{sec:general-background} provides a more detailed account of those activities. One particular method that has been applied recently in the area of circuit simulation \cite{gad09:_A_L_stable_high_order,GAD-2012-6106731,Gad_Mina_2013_6407945,Gad_2013_Mina_6359804,pimentel11:_high_order_stabl_and_stabl,Gad_Yaoyao_6935006} is based on the Obreshkov formula \cite{obreshkov42:_sur_les_quadr_mecan}. The Obreshkov-based numerical method was shown to provide key features foremost among them is the fact that the method can be implemented with arbitrary high-order without losing the $A$-stability or the $L$-stability properties. In addition, the parameters of the method (coefficients of the Obreshkov formula) are given in pre-determined analytical form. These features made the application of this method advantageous in circuit simulation.

Nevertheless, the exitsting literature do not provide a characterization of the Obreshkov order of convergence if the underlying DEs is in the form of DAE. Indeed, the order of convergence in the Obreshkov-based method is only known if the DEs take the form of ODE. The lack of rigorous analysis for Obreshkov order of convergence in DAE is viewed as a gap in the literature and is being addressed in this paper. The analysis and obtained results presented in this paper show that the order of the Obreshkov-based method is not determined only by the parameters of the integration formula, as in the case of ODE, but also by the differentiation index of the DAE. More particularly, it is shown that the differentiation index of the has the potential DAE to lower the order of the Obreshkov method below the order nominally afforded by its parameters.
\subsection{Organization of the Paper}
\label{sec:organisation-paper}
Section \ref{sec:general-background} provides the background to the topic addressed in this paper. It also provides a substantial overview of the various methods proposed in the literature to numerically solve DEs, and positions the Obreshkov method within this body of work highlighting the scope of the new contribution. Section \ref{sec:obreshk-based-numer} describes the recent progress on using the Obreshkov formula in circuit simulation underscoring its high-order for the special case of ODE. Section \ref{sec:order-char-dae} develops the mathematical analysis needed to characterize the Obreshkov order in DAE. Section \ref{sec:exper-valid} provides experimental validation of the theoretical results. Finally, Section \ref{sec:conclusion} draws important conclusion from the presented work.

\section{General Background and Scope of the Paper}
\label{sec:general-background}
This section describes the main landscape of the methods used in the numerical solution of DEs. Section \ref{sec:numer-meth-solv} dwells on the major criteria through which those methods can be viewed and classified. Section \ref{sec:disc-ideal-meth} underscores the ideal characteristics that is typically sought in a general-purpose method and reviews the literature describing how the various methods derived score on those characteristics. The review of the Obreshkov method and its recent utilization in circuit simulation is presented in Section \ref{sec:backgr-obreshk-meth}. Finally, Section \ref{sec:scope-this-work} presents the scope of the contribution of the paper.
\subsection{Notation}
\label{sec:notation}
$\mat{x}(t_n)$ will be taken as the \textit{exact} value of the solution to the DEs at $t=t_n$. On the other hand, the approximation generated by any given solution method at $t=t_n$ will be denoted $\mathat{x}_n$. In a similar manner, the exact $i^{\mathrm{th}}$ order derivative of $\mat{x}(t)$ at $t=t_n$ will be denoted by $\mat{x}^{(i)}\left(t_n\right)$, whereas its approximation is denoted by $\mathat{x}_n^{(i)}$, $i=0,1,2,\cdots$, with the convention that $\mat{x}^{(0)}\left(t_n\right)\equiv \mat{x}(t_n)$ and $\mathat{x}^{(0)}\equiv\mathat{x}_n$. $h$ in the context of this paper will refer to the latest step size, $h=t_{n}-t_{n-1}$. $\mathcal{O}\left(h^q\right)$ will mark series terms that are proportional to $h^u$, with $u \geq q$ and $\matcal{O}_r\left(h^q\right)$ is used to extend this notation to a vector of size $r$. Finally, $\mat{I}_H$ will be used to denote $H \times H$ identity matrix.

\subsection{Overview of Numerical Methods for Solving DEs}
\label{sec:numer-meth-solv}
Several themes pervade the literature of methods that numerically solve DEs as an Initial Value Problem (IVP). This section reviews those themes treating them as distinct classes noting that a particular method can lie at the intersection of two or more classes.

\subsubsection{Classification Based on the Construction of the Method}
\label{sec:class-meth-based-1}
The methods which have been proposed, or used in software packages, can be classified based on the approach they use to advance from point $t_{n-1}$ to $t_{n}$. Those methods are often grouped under the following three categories.
\begin{enumerate}
\item \textbf{Linear Multi-Step (LMS).} These methods rely on the approximations generated at the past $r$ points, that is
  $\mathat{x}_{n-1},\mathat{x}_{n-2},\cdots,\mathat{x}_{n-r} $ and the first-order derivatives at those points to obtain an approximation to $\mat{x}\left(t_{n}\right)$. LMS methods are usually represented by the following formula,
\begin{equation}
  \label{eq:LMS formulae}
  \sum_{l=0}^{r}\mu_l\mathat{x}_{n-l} = h\sum_{l=0}^{r}\zeta_l \mathat{x}^{(1)}_{n-l}
\end{equation}
where the coefficients $\mu_i,\zeta_i$ are specific to each integration method and are chosen to make the first $q$ Taylor series terms in the operator $\matcal{L}(\mat{x}(t)):=\sum_{l=0}^{r}\mu_l\mat{x}(t_{n-l}) - h\sum_{l=0}^{r}\zeta_l \mat{x}^{(1)}(t_{n-l})$ vanish \cite{gear71:_numer}. Notable example among these methods, are the backward Euler (BE), trapezoidal rule (TR) and the backward differentiation formulae (BDF).
\item \textbf{Single-Step Multi-Stage (SSMS).}
  Those methods use the approximation at a single past time point, $t_{n-1}$, along with approximations to off-step points (known as stages), at $t_i=t_n+c_ih_n$, where $c_i< 1$, $i=1,\cdots,s$. The Runge-Kutta (RK) methods \cite{butcher03:BOOK_numer_method_ordin_differ_equat} are the best known example in this class of methods. An RK-method is typically represented by the Butcher tableau $\begin{Smallmatrix}{c|c}\mat{c} & \mat{A}\\  \hline& \mat{b} \end{Smallmatrix}$, where $\mat{A}\in\mathbb{R}^{s\times s},\mat{b},\mat{c}\in\mathbb{R}^s$ with $s$ the number of stages. The formulation of the RK for a system of ordinary differential equations $\mat{x}^{(1)}=\mat{f}\left(t,\mat{x}\right)$ takes the following form
\begin{eqnarray*}
  \label{eq:RK-formulation}
  \mathat{g}_i &= & \mat{f}\left(t_{n-1}+c_ih,h\Sigma_{j=1}^{s}{a_{ij}}{\mathat{g}_i}\right) \quad i=1,\cdots,s\\
  \mathat{x}_{n} &= & \mathat{x}_{n-1} +h\sum_{i=1}^{s}b_i\mathat{g}_i
\end{eqnarray*}
where $\mathat{g}_i$ are called the \textit{stage value} that approximate $ \mat{x}(t_{n-1}+c_ih)$ and $a_{ij}, b_i,c_i$ are the components of $\mat{A},\mat{b}$ and $\mat{c}$, respectively.

\item \textbf{General Linear Methods (GLM).} The GLM are typically viewed as a hybrid between the LMS and SSMS methods, since it uses the past $r$ points along with $s$ stages to advance
to the next step. The construction of a GLM method is usually represented by the block matrix 
$
\left[
  \begin{array}[c]{c c}
    \mat{A} & \mat{U}\\
    \mat{B} &\mat{V}
  \end{array}
\right]
$
where $\mat{A}\in\mathbb{R}^{s\times s},\mat{B}\in{\mathbb{R}^{r\times s}},\mat{U}\in\mathbb{R}^{s\times r}$, $\mat{V}\in\mathbb{R}^{r\times r}$. The time stepping to $\mathat{x}_n$ is presented as
\begin{eqnarray}
  \label{eq:GLM}
  \mathat{g}_i &=& a_{ij}h\mat{f}\left(t_{n-1}+c_ih,h\Sigma_{j=1}^{s}{a_{ij}}{\mathat{g}_i}\right)  + \Sigma_{j=1}^ru_{ij}\mathat{x}_{n-j},\quad i=1,\cdots,s\nonumber\\
  \mathat{x}_{n-i+1} &=& \Sigma_{j=1}^sb_{ij}h\mathat{g}_j+\Sigma_{j=1}^rv_{ij}\mathat{x}_{n-j},\quad i=1,\cdots,r\nonumber
\end{eqnarray}
where $u_{ij}$, $v_{ij}$, $a_{ij}$, $b_{ij}$ are the components of $\mat{U}$, $\mat{V}$, $\mat{A}$ and $\mat{B}$, respectively.
\end{enumerate}

\subsubsection{Classification Based on Stability Criteria}
\label{sec:class-meth-based}
The stability properties of a given integration method is usually studied through characterizing its behavior in approximating the solution to the scalar test problem, defined by,
\begin{equation}
  \label{eq:Scalar Test problem}
  \frac{\mathrm{d} x(t)}{\mathrm{d} t} = \lambda x(t)
\end{equation}

A method is said to be $A$-stable if the successive approximations
to the scalar test problem satisfy $|\hat{x}_j|< |\hat{x}_i|$, $(j>i)$ for all
values of $\mathbb{C}^-$, $L$-stable if $\lambda\in\mathbb{C}^-\cup
\{\infty\}$, and $A(\alpha)$-stable if
$\lambda\in\left\lbrace\lambda;|\arg(-\lambda)|<\alpha,\lambda\neq
0\right\rbrace$. Note here that $L$-stability implies necessarily $A$-stability. 
\subsubsection{Classification Based on the Order of the Method}
\label{sec:class-based-order}
A third way to characterize a given method is to derive the relation between the approximation $\mathat{x}_n$ and the exact value $\mat{x}(t_n)$ in terms of the step size $h=t_{n}-t_{n-1}$, assuming that approximations generated at, or prior to, $t_{n-1}$ are exact. Such relation defines the ``local order'' of the method and is made more precise through the following definition.
\begin{definition}[Local Order of Convergence]
  \label{def:order-of-convergence}
  An $r$-step  method used to approximate the solution of a general differential equations is said to be of local order $q$ if the approximate solution, $\hat{x}_n$ is related to the exact solution, $x(t_n)$, through,
  \begin{equation}
    \label{eq:1}
    \mathat{x}_n = \mathat{x}(t_n) + C h^{q+1}\left.\frac{\textnormal{d}^{q+1}}{\textnormal{d}t^{q+1}}\mat{x}(t)\right|_{t=t_{n-1}}+\matcal{O}_N(h^{q+2})
  \end{equation}
  assuming that $\mathat{x}_{n-p} = \mat{x}\left(t_{n-p}\right)$, for $p=1,\cdots,r$.
\end{definition}
\subsubsection{Classification Based on Explicit vs. Implicit methods}
\label{sec:class-based-expl}
Within the above classes, a method can be further classified as explicit vs. implicit. For example, LMS methods are implicit if its integration formula (\ref{eq:LMS formulae}) includes both the (unknown) $\mathat{x}_n$ and its derivative $\mathat{x}^{(1)}_n$, i.e., if $\zeta_0\neq 0$. Otherwise, it is called explicit. SSMS or GLM methods are considered explicit if the stage $\mathat{g}_n$ depends strictly on the stages $\mathat{g}_m$ for  $m < n$. This would be the case if the matrix $\mat{A}$ is strictly lower triangular, i.e., $a_{ij}=0$ for $j\ge i$. Otherwise it is considered implicit. Explicit methods have the advantage that computing the unknown is only done through linear combination of known values, with complexity proportional to $N$, $\mathcal{O}(N)$, while implicit methods require a linear system solution, which scales in the general case as $\mathcal{O}\left((sN)^{\alpha}\right)$, $s$ being the number of stages, and $1 < \alpha \le 3 $.  

\subsection{Literature Overview}
\label{sec:disc-ideal-meth}

The common thrust in the methods proposed under the above classes has been a quest for an unconditionally stable method with high order and a computational complexity that scales modestly with the problem size $N$. In the domain of circuit simulation, for instance, the requirement of unconditional stability is found in the general class of $A$-stable methods. Therefore, it becomes imperative that a method constructed using one the above three approaches mentioned in Section \ref{sec:class-meth-based-1} satisfy the $A$-stability condition indicating the local order of approximation, with higher order being more desirable than lower order.

Dahlquist, in his landmark paper \cite{Dahl63}, showed that $A$-stable methods constructed through the LMS approach cannot have order higher than 2. It was also shown that explicit methods constructed through the LMS approach cannot be $A$-stable. This result, rightly termed the Dahlquist barriers \cite{hiarer08:_solvin_ordin_differ_equat_i_nonst_probl}, put to rest the quest for deriving LMS $A$-stable methods of any order higher than 2 $(q\le 2)$. As a result of those barriers, achieving order higher than 2 using the LMS methodology can only be carried out through relaxing the $A$-stability requirement and adopting $A(\alpha)$-stability instead, with the (BDF) method being the best-known example in these methods.

Unlike LMS-constructed methods, SSMS methods such as the RK do not have a theoretical barrier that precludes the existence of arbitrary high-order $A$-stable methods. For example, it is known that the highest-order for $A$-stable RK method is only restricted by the number of stages $s$, and the identity that the order $q$ satisfies $q \le 2s$ was first proposed by the Daniel-Moore conjecture \cite{daniel70:_comput} before being proved by the theory of order stars \cite{iserles91:_order_stars,wanner78:_order_stars_stabil_theor}. Nevertheless, the difficulty in finding high-order RK methods is not due to the existence of a theoretical barrier but rather in constructing the appropriate Butcher tableau. For example, to
derive the Butcher tableau corresponding to a method of order $10$, one would need to solve a system of $1205$ equations with extremely complex nonlinear behavior, where only a limited set of the solutions may be feasible \cite{butcher03:BOOK_numer_method_ordin_differ_equat}. In addition, actual implementation of RK methods requires the solution of $sN$ strongly coupled nonlinear equations to obtain the stage values, $\mathat{g}_i$, 
\cite{burrage95:_paral_sequen_method_ordin_differ_equat}.

Similar to SSMS, GLM methods also do not have theoretical ``barriers'' on the order for the $A$-stability. Rather, its stability is characterized through a two-dimensional polynomial $\mat{\Phi}(w,z):=\det\left(w\mat{I}-\mat{M}(z)\right)$, with $\mat{M}(z) = \mat{V}+z\mat{B}\left(\mat{I}-z\mat{A}\right)^{-1}\mat{U}$ \cite{butcher06:ACTA_NUMERICA__gener_linear_method}. In fact, $A$-stability is guaranteed in the GLM so long as  all the $w$-roots of the polynomial $\mat{\Phi}(w,z)=0$ lie inside the unit disk of the complex plan whenever $\Re{\{z\}}<0$, with only simple roots allowed at the unit circle. The barrier on the highest possible order $A$-stable GLM is given by the following theorem.
\begin{theorem}(\cite{wanner78:_order_stars_stabil_theor})
  Let $\mat{\Phi(w,z)}$ be a two-dimensional polynomial corresponding to an $A$-stable GLM method of order $q$. Then $q\le 2s$. Furthermore, methods attaining the highest possible order, i.e. for $p=2s$, will have an error constant satisfying the inequality $C\ge (-1)^{s}\frac{s!s!} {(2s)!(2s+1)!}$.
  
\end{theorem}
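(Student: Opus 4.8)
The plan is to prove this through the \emph{order star} technique of Wanner, Hairer and Nørsett, which converts the algebraic constraints of $A$-stability and order into a topological statement about a region of the complex plane. First I would isolate the \emph{principal branch} $w(z)$ of the stability polynomial, i.e.\ the root of $\mat{\Phi}(w,z)=0$ with $w(0)=1$ that tracks the amplification factor $e^{z}$ of the scalar test problem \eqref{eq:Scalar Test problem}. By the order hypothesis this branch reproduces $e^{z}$ to order $q$, so that $w(z)e^{-z}=1+Cz^{q+1}+\mathcal{O}(z^{q+2})$ near the origin, where $C$ is exactly the error constant in the statement. Because $w(z)$ is a root of $\det\bigl(w\mat{I}-\mat{M}(z)\bigr)=0$ with $\mat{M}(z)=\mat{V}+z\mat{B}(\mat{I}-z\mat{A})^{-1}\mat{U}$, it is an algebraic function whose poles lie among the reciprocal eigenvalues of $\mat{A}$; hence it has at most $s$ poles.

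Next I would introduce the order star $A=\{z\in\mathbb{C}:|w(z)|>|e^{z}|\}$ and record three structural facts. \textbf{(i) Local geometry.} Since $\log|w(z)e^{-z}|\approx\Re\bigl(Cz^{q+1}\bigr)$ near $z=0$, writing $z=re^{i\theta}$ shows this quantity changes sign $2(q+1)$ times as $\theta$ runs over $[0,2\pi)$; thus exactly $q+1$ sectors (``fingers'') of $A$, each of opening $\pi/(q+1)$, emanate from the origin, interlaced with $q+1$ sectors of the complement. \textbf{(ii) Poles lie in $A$.} Near each pole $|w(z)e^{-z}|\to\infty$, so every pole of $w$ sits in the interior of $A$, and each \emph{bounded} component of $A$ must contain at least one such pole. \textbf{(iii) $A$-stability.} Since $|e^{z}|=1$ on $i\mathbb{R}$, $A$-stability ($|w|\le1$ for $\Re z\le0$) forces $A\cap i\mathbb{R}=\emptyset$ and forbids poles of $w$ in the closed left half-plane; the reality of the coefficients gives $\overline{w(\bar z)}=w(z)$, so $A$ is symmetric about the real axis.

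Then I would run the counting argument. As $\Re z\to+\infty$ the function $w$ tends to its finite value $w(\infty)$ while $e^{-z}\to0$, so $|w(z)e^{-z}|\to0$: the far right half-plane lies in the complement of $A$. Consequently any finger opening into the right half-plane cannot escape to infinity there, and by (iii) it cannot cross the imaginary axis, so it is \emph{bounded}; by (ii) each such finger absorbs at least one pole. Roughly half of the $q+1$ fingers open into the right half-plane (the exact count requiring parity bookkeeping once one fixes where the forbidden directions $\theta=\pm\pi/2$ fall among the sectors), and all of them must be fed by the at most $s$ poles. This yields $\tfrac{q+1}{2}\lesssim s$, which after the careful count sharpens to $q\le2s$. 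For the extremal case $q=2s$ the configuration becomes rigid: all $s$ poles are simple, distinct, and each occupies its own bounded right-half-plane finger, a pattern realised uniquely by the diagonal $(s,s)$-Padé approximant to $e^{z}$ (the Gauss method). Comparing $w$ against this extremal approximant by a quantitative (winding-number/Rouché) estimate on the order-star boundary pins the sign of $C$ and gives $C\ge(-1)^{s}\tfrac{s!\,s!}{(2s)!(2s+1)!}$, with equality for the Padé/Gauss case.

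The hard part, I expect, is making (ii) and the finger budget rigorous for a genuine \emph{algebraic} function rather than a rational one: the branch points of $w(z)$ (absent in the Runge--Kutta rational-function setting) complicate both the claim that every bounded finger contains a pole and the verification that the growth of $w$ at infinity leaves no ``free'' unbounded fingers in the right half-plane that would break the count. The error-constant inequality is then a delicate refinement, hinging on identifying the $(s,s)$-Padé approximant as the unique equality case of the counting bound and controlling the deviation of $C$ from its Padé value through the boundary estimate.
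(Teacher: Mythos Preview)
The paper does not prove this theorem at all. It appears in Section~\ref{sec:disc-ideal-meth} purely as a quoted background result, attributed via the citation \cite{wanner78:_order_stars_stabil_theor}, and the authors move on immediately to other matters; no argument, sketch, or even hint of a proof is given anywhere in the text. So there is no ``paper's own proof'' against which your proposal can be compared.

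That said, your sketch is not wasted effort: the order-star machinery you outline is precisely the method of the cited source, Wanner--Hairer--N{\o}rsett (1978), and your structural facts (i)--(iii) and the finger/pole counting argument are the standard ingredients of that proof. Your identification of the genuinely hard point---that for GLMs the principal branch $w(z)$ is an \emph{algebraic} function with branch points rather than a rational function as in the Runge--Kutta case, so that the ``each bounded finger contains a pole'' lemma and the behaviour at infinity require extra care on the associated Riemann surface---is exactly right, and is what distinguishes the GLM version of the theorem from the simpler rational-approximation barrier. If you want to make this rigorous you should consult the original paper or the monograph of Iserles and N{\o}rsett \cite{iserles91:_order_stars}, where the Riemann-surface bookkeeping is carried out in detail; but do not look for it in the present paper, because it is not there.
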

 Examples GLM are the diagonally implicit multi stage methods (DIMSIM) methods
\cite{butcher93:DIMISIM_diagon_implic_multi_stage_integ_method}
and multistep collocation techniques
\cite{lie89:_super_multis_colloc}

To the best of the author's knowledge, the construction of $A$-stable GLM method is still an open question. However, as the above theorem states, the only limit to the order of any such method, when it is found, will be determined by the number of stages $s$ used in the construction.

\subsection{Background on the Obreshkov Method}
\label{sec:backgr-obreshk-meth}

The Obreshkov method, although dating back to 1942 \cite{obreshkov42:_sur_les_quadr_mecan}, has not received serious attention during the above developments. In fact, this method can be considered as a fourth class of methods as far as the its construction approach is considered, since it relies on higher-order derivatives. The method is constructed by forcing two successive approximations, $\mathat{x}_{n-1},\mathat{x}_{n}\in\mathbb{R}^N$ at $t =t_{n-1}, t_{n}$, to satisfy the Obreshkov formula \cite{obreshkov42:_sur_les_quadr_mecan,ehle68:_high_order_method_numer_solut_d}, which is given by,
\begin{equation}
  \label{eq:def-of-the-Obreshkov-formula}
  \sum_{i=0}^m (-1)^i\alpha_{i,l,m} h^i \mathat{x}^{(i)}_{n} = \sum_{i=0}^l \alpha_{i,m,l} h^i \mathat{x}^{(i)}_{n-1}
\end{equation}
where
\begin{equation}
  \label{eq:The-aplha-factors}
  \alpha_{i,l,m} = \frac{(m+l-i)!}{(m+l)!}\left(
    \begin{array}[c]{c}
      m\\i
    \end{array}
  \right),
\end{equation}
wehre $l$ and $m$ are integers that define the parameters of the formula. It should be obvious that the Obreshkov method is a single-step method in the sense that it uses the immediate past approximation (for $t=t_{n-1}$) to advance to the time point at $t=t_n$. However, in contrast to LMS methods, it requires the higher-order derivatives at the past and current time point. Furthermore, the method is implicit since its formula includes the derivatives at the unknown point $t_n$.

In 1968, Ehle \cite{ehle68:_high_order_method_numer_solut_d} showed that Obreshkov formula, when implemented in a single-step implicit mode with specific parameters, $m-2 \le l\le m$, can be used to derive $A$-stable and $L$-stable methods. One salient advantage to the Obreshkov-based approach, compared to the RK or the GLM, was the fact the coefficients $\alpha_{i,l,m}$  defining the method are pre-determined and are given by the expression in \eqref{eq:The-aplha-factors} for arbitrary high-order. Despite its potential advantages, Ehle remarked that this integration formula appears to be “largely of theoretical interest” due to having to work with implicit high-order derivatives \cite{ehle68:_high_order_method_numer_solut_d}. Consequently, interest in working toward a practical implementation of the Obreshkov formula in general nonlinear systems waned. In fact, Gear, in his classical work on BDF methods \cite{gear71:_numer}, notes that due to such a difficulty, “the application of these methods is a major computing task for large systems and is not generally practical.” Such an attitude toward using Obreshkov-like formulas persisted over three decades with only one exception; when N{\o}rsett attempted to modify them in an effort to reduce the computational difficulties in solving linear ordinary DEs. The modified method, however, lost the $A$-stability property for orders higher than five. In addition, its generalization to nonlinear DAE-based systems was not obvious \cite{norsett74:_one_step_method_hermit_type}.

In a more recent development, research in the area of circuit simulation renewed the interest in an Obreshkov-based approach to the numerical solution of DEs. In \cite{gad09:_A_L_stable_high_order}, a graphical methodology, called ``rooted trees'', was used to represent the nonlinear functions used in nonlinear circuit device models and proved successful in computing the high-order derivatives in an efficient manner, thereby removing the main obstacle that impeded using the Obreshkov method in industrial-scale problems. Following this work, the Obreshkov method was tailored to more specialized circuits in \cite{Gad_Mina_2013_6407945,Gad_2013_Mina_6359804,Gad_Yaoyao_6935006,Gad_Farhan_2017_7480392}, with increasing success compared to the conventional methods based on LMS that are used in circuit simulations. Further characterization of the Obreshkov method was also presented in \cite{GAD-2012-6106731} along with more efficient implementation techniques.

\subsection{Scope of this Work}
\label{sec:scope-this-work}
The work presented in this paper is motivated by the fact that the existing literature provides a characterization for the order of the Obreshkov method \textit{only} when it is being applied to a system of ordinary differential equations (ODE). On the other hand, the treatment of the more general case of DAE is still lacking in literature. It is a well-known fact that DAE have fundamentally different and richer structural characteristics from those of ODE, and that those characteristics can cause a particular method to behave differently when applied to DAE (compared with its application to ODE). For example, in the class of SSMS-RK methods, the order in solving DAE deviates from the order of solving ODE \cite{ascher98:BOOK_comput_method_ordin_differ_equat,kunkel06:_differ_algeb_equat_analy_and_numer_solut, maffezzoni07:_time_runge_kutta}, leading to loss of order. The core contribution in this work expands the existing theory, which describes the order of Obreshkov method in ODE, to cover the general case of DAE. It demonstrates that the order of the Obreshkov method, when used to solve DAE, is a function of the system differentiation index \cite{kunkel06:_differ_algeb_equat_analy_and_numer_solut}. It is shown in this work that DAE with high differentiation index do in fact lower the order of the Obreshkov method, which is a results that, to the best of the author's knowledge, is novel.

\section{Implementation and Order of the Obreshkov Method}
\label{sec:obreshk-based-numer}
Characterization of the order achieved by a given numerical method can be carried out by considering a linear version of the DEs in (\ref{eq:DAE-nonlinear}). For the case of an ODE, such DEs are represented by
\begin{equation}
  \label{eq:The-ODE}
  \frac{\mathrm{d}\mat{x}(t)}{\mathrm{d}t} = \mat{A}\mat{x}(t) +\mat{u}(t)
\end{equation}
where $\mat{A}$ is a matrix in $\mathbb{R}^{N\times N}$. On the other hand, the DAE formulation for linear DEs can be put in the form
\begin{equation}
  \label{eq:The-DAE}
  \mat{C}\frac{\mathrm{d}\mat{x}(t)}{\mathrm{d}t}+\mat{G}\mat{x}(t) =\mat{b}(t) 
\end{equation}
where $\mat{C}$ and $\mat{G}$ are matrices in $\mathbb{R}^{N\times N}$ with $\mat{C}$ being generally singular matrix. In case that the matrix $\mat{C}$ is nonsingular, then (\ref{eq:The-DAE}) can be put in the form of the ODE (\ref{eq:The-ODE}) using $\mat{A}=-\mat{C}^{-1}\mat{G}$ and $\mat{u}(t)=\mat{C}^{-1}\mat{b}(t)$. 

In the circuit simulation framework, the matrix $\mat{C}$ carries the so-called ``stamps'' of memory elements such as inductors and capacitors, while $\mat{G}$ carries the stamps of the memoryless elements (including resistors, independent/dependent voltage/current sources), while the vector $\mat{b}(t)\in\mathbb{R}^N$ groups the contributions of the independent voltage/current sources in the circuit. The standard approach used to construct those matrices is based on the modified nodal analysis (MNA) formulation \cite{mna_1975,najm2010circuit}.

\subsection{Utilizing the Obreshkov Formula in Circuit Simulation}
\label{sec:util-obreshk-form}
The application of the Obreshkov  formula in the context of circuit simulation was proposed recently in \cite{gad09:_A_L_stable_high_order,GAD-2012-6106731,Gad_Mina_2013_6407945,Gad_2013_Mina_6359804}. In this approach to using the Obreshkov formula \eqref{eq:def-of-the-Obreshkov-formula} on (\ref{eq:The-DAE}), the time stepping from $t=t_{n-1}$ to $t_n$ is done by solving the following system
\begin{equation}
  \label{eq:The Augmented MNA for the Obreshov formula}
  \left(\mattilde{C} + \mattilde{G}\right)\mathats{\xi}_{n} = \mattilde{b}_{n}
\end{equation}
The matrices $\mattilde{C}$ and $\mattilde{G}\in\mathbb{R}^{(k+1)N\times(k+1)N}$ in (\ref{eq:The Augmented MNA for the Obreshov formula}) are obtained from $\mat{C}$ and $\mat{G}$ as follows
\begin{equation}
  \label{eq:The Augmented C matrix}
  \mattilde{C} =
  \frac{1}{h}
  \left[
    \begin{array}[c]{c c c c c }
      \mat{0} & \mat{C} & \cdots  &        & \mat{0}\\
      \mat{0} & \mat{0} & \mat{C} & \cdots & \mat{0}\\
      \vdots  & \vdots  & \ddots  &        & \mat{\vdots}\\
      \mat{0} & \mat{0} &  \cdots & \mat{0}& \mat{C}\\
      \mat{0} & \mat{0} &         & \cdots & \mat{0}
    \end{array}
  \right]
\end{equation}
\begin{equation}
  \label{eq:The Augmented G matrix}
  \mattilde{G} =
  \left[
    \begin{array}[c]{c c c c c }
      \mat{G}     & \mat{0}     & \cdots  &        & \mat{0}\\
      \mat{0}     & \mat{G}     & \mat{0} & \cdots & \mat{0}\\
      \vdots      & \vdots      & \ddots  &        & \mat{\vdots}\\
      \mat{0}     & \mat{0}     & \cdots  & \mat{G}& \mat{0}\\
      \alpha_{0,l,m}\mat{I}_N & -\alpha_{1,l,m}\mat{I}_N &         & \cdots & (-1)^{k}\alpha_{m,l,m}\mat{I}_N
    \end{array}
  \right]
\end{equation}
and the vectors $\mattilde{b}_{n}$ and $ \mathats{\xi}_{n} $ are given by
      \begin{equation}
        \label{eq:b-tilde}
        \mattilde{b}_{n} =
      \left[
        \begin{array}[c]{c c c c }
          \left({\mat{b}^{(0)}(t_{n})}\right)^{\top} &          \cdots & \left({h^{m-1}\mat{b}^{(m-1)}(t_{n})}\right)^{\top}  & \left(\sum_{i=0}^{l}\alpha_{i,m,l} h^i {\mathat{x}^{(i)}_{n-1}}\right)^{\top}
        \end{array}
        \right]^{\top}
      \end{equation}

\begin{equation}
  \label{eq:The augmented ssystem the vector xi}
  \mathats{\xi}_{n} =
  {\left[
    \begin{array}[c]{c c c c}
      {\mathat{x}^{(0)}_{n}}^{\top} & h{\mathat{x}^{(1)}_{n}}^{\top}& \cdots & h^m{\mathat{x}^{(m)}_{n}}^{\top}
    \end{array}
  \right]}^{\top}
\end{equation}
where $\mat{b}^{(i)}(t_n)=\frac{\mathrm{d}^i\mat{b}(t)}{\mathrm{d}t^i}$ evaluated at $t=t_n$. The above implementation of the Obreshkov method in circuit simulation was shown to be computationally advantageous as it preserves the inherent sparsity of the matrices $\mat{G}$ and $\mat{C}$ that arise from the circuit formulation and enables using block forms of LU factorization such as the KLU \cite{DBLP:journals/toms/DavisN10} which scales almost linearly with the circuit size and the order $m$ \cite{Gad_Yaoyao_6935006}. The initial point of the time marching is done by computing $\mat{x}^{(i)}(0)$, $i=0,\cdots,l$, and the time marching proceeds from $t=t_{n-1}$ to $t_n$ with appropriately sized time step  as shown in \cite{gad09:_A_L_stable_high_order}.

    \subsection{Order of the Obreshkov Formula with ODE}
    \label{sec:order-obreshk-form}
    The first result listed here is concerned with the order of the Obreshkov method when applied to a system of ODE (\ref{eq:The-ODE}).
    \begin{lemma}
      \label{lemma:order-obreshk-form--ODE}
      Assume that the $\mat{C}$ matrix is non-singular and, hence, the DAE system (\ref{eq:The-DAE}) can be put in the form of the ODE (\ref{eq:The-ODE}). Also assume that
      \begin{equation}
        \label{eq: lemma-assumption}
        \mathat{x}_{n-1}^{(i)} = \left.\frac{\mathrm{d}^{i}\mat{x}(t)}{\mathrm{d}t^{i}}\right|_{t=t_{n-1}},\quad i=0,\cdots,m
      \end{equation}
      It then follows that
      \begin{equation}
        \label{eq:Order-ODE-lemma}
        h^i\mathat{x}_n^{(i)} = h^i\mat{x}^{(i)}\left(t_n\right) + \matcal{O}_N\left(h^{l+m+i+1}\right),\quad i=0,\cdots,m
      \end{equation}
    \end{lemma}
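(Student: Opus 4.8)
The plan is to read off the meaning of the $(m+1)$ block-rows of the augmented solve \eqref{eq:The Augmented MNA for the Obreshov formula} (here $k=m$, so the system has size $(m+1)N$) and then run a short error-propagation argument. First I would observe that, for each block-row $j=0,\dots,m-1$, the superdiagonal $\mat{C}/h$ coming from $\mattilde{C}$ in \eqref{eq:The Augmented C matrix} and the diagonal $\mat{G}$ coming from $\mattilde{G}$ in \eqref{eq:The Augmented G matrix} combine with the scaling $\left(\mathats{\xi}_n\right)_j = h^j\mathat{x}^{(j)}_n$ to give, after cancelling the common factor $h^j$,
\begin{equation*}
  \mat{C}\,\mathat{x}^{(j+1)}_n + \mat{G}\,\mathat{x}^{(j)}_n = \mat{b}^{(j)}(t_n),\qquad j=0,\dots,m-1,
\end{equation*}
which are exactly the relations obtained by differentiating the DAE \eqref{eq:The-DAE} $j$ times and evaluating at $t_n$. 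The last block-row reproduces the Obreshkov formula \eqref{eq:def-of-the-Obreshkov-formula} relating the $\mathat{x}^{(i)}_n$ to the $\mathat{x}^{(i)}_{n-1}$. Thus the augmented linear system is equivalent to these two sets of conditions.

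Next, since $\mat{C}$ is non-singular I would write $\mat{A}=-\mat{C}^{-1}\mat{G}$ and $\mat{u}^{(j)}=\mat{C}^{-1}\mat{b}^{(j)}$, so the internal relations read $\mathat{x}^{(j+1)}_n=\mat{A}\,\mathat{x}^{(j)}_n+\mat{u}^{(j)}(t_n)$. The exact derivatives obey the identical recursion $\mat{x}^{(j+1)}(t_n)=\mat{A}\,\mat{x}^{(j)}(t_n)+\mat{u}^{(j)}(t_n)$, obtained by differentiating the ODE \eqref{eq:The-ODE}. Subtracting and defining the derivative errors $\mat{e}^{(j)}:=\mathat{x}^{(j)}_n-\mat{x}^{(j)}(t_n)$, the forcing terms cancel and I obtain the homogeneous recursion $\mat{e}^{(j+1)}=\mat{A}\,\mat{e}^{(j)}$, hence $\mat{e}^{(j)}=\mat{A}^{j}\mat{e}^{(0)}$ for $j=0,\dots,m$. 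So every derivative error is pinned down by the single vector $\mat{e}^{(0)}$.

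It then remains to bound $\mat{e}^{(0)}$ using the last block-row. The key input is the defining accuracy property of the Obreshkov formula: because the coefficients \eqref{eq:The-aplha-factors} make the formula exact for polynomials of degree up to $l+m$ (equivalently, its stability function is the $[l/m]$ Pad\'e approximant of the exponential, of order $l+m$), Taylor expansion of a smooth solution about $t_{n-1}$ gives a residual $\mats{\tau}:=\sum_{i=0}^m(-1)^i\alpha_{i,l,m}h^i\mat{x}^{(i)}(t_n)-\sum_{i=0}^l\alpha_{i,m,l}h^i\mat{x}^{(i)}(t_{n-1})=\matcal{O}_N\!\left(h^{l+m+1}\right)$. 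Invoking the hypothesis \eqref{eq: lemma-assumption}, the right-hand side of the approximate Obreshkov formula coincides with that of the exact one, so subtracting the two leaves $\sum_{i=0}^m(-1)^i\alpha_{i,l,m}h^i\mat{e}^{(i)}=-\mats{\tau}$. Substituting $\mat{e}^{(i)}=\mat{A}^i\mat{e}^{(0)}$ yields $\mat{P}(h\mat{A})\,\mat{e}^{(0)}=-\mats{\tau}$ with $\mat{P}(h\mat{A}):=\sum_{i=0}^m(-1)^i\alpha_{i,l,m}(h\mat{A})^i$. Since $\alpha_{0,l,m}=1$, $\mat{P}(h\mat{A})=\mat{I}_N+\matcal{O}_N(h)$ is invertible for small $h$ with $\mat{P}(h\mat{A})^{-1}=\mat{I}_N+\matcal{O}_N(h)$, so $\mat{e}^{(0)}=-\mat{P}(h\mat{A})^{-1}\mats{\tau}=\matcal{O}_N\!\left(h^{l+m+1}\right)$; propagating through $\mat{e}^{(i)}=\mat{A}^i\mat{e}^{(0)}$ (with $\mat{A}^i$ fixed in $h$) and multiplying by $h^i$ gives $h^i\mat{e}^{(i)}=\matcal{O}_N\!\left(h^{l+m+i+1}\right)$, which is \eqref{eq:Order-ODE-lemma}. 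I expect the main obstacle to be rigorously pinning the order of the truncation residual $\mats{\tau}$ — i.e. confirming that the factorial coefficients \eqref{eq:The-aplha-factors} indeed annihilate all Taylor terms through $h^{l+m}$ — since the remaining steps (the block-row bookkeeping, the constant-matrix propagation, and the perturbative inversion of $\mat{P}$) are routine.
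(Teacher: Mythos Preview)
Your argument is correct and takes a more elementary route than the paper. The paper proves Lemma~\ref{lemma:order-obreshk-form--ODE} as a by-product of its general DAE machinery (Section~\ref{sec:diff-error-comp}): it passes through the Weierstrass transform (so $\mat{A}$ is replaced by its Jordan block $-\mat{J}$), assembles the $(m{+}1)\times(m{+}1)$ block matrix $\mat{K}$ of \eqref{eq:def of K }, and computes $\mat{K}^{-1}\mats{\Delta}$ via a $2\times 2$ block Schur-complement inversion to obtain \eqref{eq:Differential-error-component-1} and hence \eqref{eq:Oder-in-in-differential}. You bypass both the Weierstrass step and the block-inversion formula: reading the first $m$ block-rows as the homogeneous recursion $\mat{e}^{(j+1)}=\mat{A}\,\mat{e}^{(j)}$ collapses the whole system to a single $N\times N$ equation $\mat{P}(h\mat{A})\,\mat{e}^{(0)}=-\mats{\tau}$, whose perturbative inversion is immediate since $\alpha_{0,l,m}=1$. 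The two computations are equivalent (your $\mat{P}(h\mat{A})$ plays the role of the paper's Schur complement), but your presentation is shorter and self-contained for the ODE case, while the paper's is organized so that the differential and algebraic analyses run in parallel and combine into Theorem~\ref{them:DAE-theorem-1}.

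One minor note: the obstacle you flag---verifying that the coefficients \eqref{eq:The-aplha-factors} annihilate all Taylor terms through $h^{l+m}$---is exactly Lemma~\ref{lemma:Obreshkov-Order}, already stated in the paper as a known result, so you may simply cite it rather than redo it.
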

The proof of Lemma \ref{lemma:order-obreshk-form--ODE} will be given as part of the proof of the more general result pertinent to the case of DAE, in Section \ref{sec:diff-error-comp}.
    \begin{remark}
      The local order of convergence for the actual waveform ($i=0$) in the Obreshkov method when applied to a system of ODE is determined solely based on the parameters $l$ and $m$ used in the Obreshkov formula.
    \end{remark}
    \begin{remark}
      The $i^{\mathrm{th}}$ order derivative is approximated to the same degree as the lower-order derivatives. This fact follows from \eqref{eq:Order-ODE-lemma}, or alternatively from $  \mathat{x}_n^{(i)} - \mat{x}^{(i)}\left(t_n\right)$  is given by $\matcal{O}_N\left(h^{l+m+1}\right)$, independent from $i$. 
    \end{remark}

    \section{Obreshkov Order Characterization in DAE}
    \label{sec:order-char-dae}
    
\subsection{Preliminaries}
\label{sec:preliminaries}
The development of the analysis is predicated on the following prelimiaries. 
\begin{definition}[Differentiation Index]
  The minimum number of times that all or part of the DAE (\ref{eq:The-DAE}) must be differentiated with respect to $t$ in order to explicitly express the derivative $\frac{\mathrm{d}\mat{x}(t)}{\mathrm{d}t}$ in terms of $\mat{x}(t)$ and $t$ is defined as the \textnormal{differentiation index} of the DAE. 
\end{definition}
\begin{definition}[Nilpotent matrix]
  \label{def:preliminaries-1-Matrix Nilpotency Index}
  A nilpotent square matrix $\mat{N}$ is defined such that $\mat{N}^\nu=\mat{0}$ and $\mat{N}^{\nu-1}\neq \mat{0}$ for some positive integer $\nu$. $\nu$ is referred to as the \textnormal{nilpotency index} of $\mat{N}$.
\end{definition}
\begin{definition}[Solvability of the DAE \cite{ascher98:BOOK_comput_method_ordin_differ_equat}]
  \label{def:preliminaries-1-Solvability}
  Let $\mathcal{I}$ be an open interval of $\mathbb{R}$, $\Omega $ a connected open subset of $\mathbb{R}^{2N+1}$ and $\mat{f}$ of (\ref{eq:DAE-nonlinear}) differentiable from $\Omega $ to $\mathbb{R}^N$. Then the DAE (\ref{eq:DAE-nonlinear}) is \textnormal{solvable} on $\mathcal{I}$ in $\Omega$ if there is an $r$-dimensional family of solutions $\phi(t,c)$ defined on a connected open set $\mathcal{I}\times\bar{\Omega},\bar{\Omega}\subset\mathbb{R}^r$, such that:
  \begin{enumerate}
  \item $\phi(t,c)$ is defined all of $\mathcal{I}$ for each $c\subset\bar{\Omega}$
  \item $(t,\phi(t,c),\phi^{(1)}(t,c))\in\Omega$ for $(t,c)\in\mathcal{I}\times\bar{\Omega}$
  \item if $\psi(t,c)$ is any other solution with $(t,\psi(t,c),\psi^{(1)}(t,c))\in\Omega$ then $\psi(t) = \phi(t,c)$ for some $c\in\bar{\Omega}$
    \item The graph of $\phi$ as a function of $(t,c)$ is an $r$-dimensional manifold.
  \end{enumerate}
\end{definition}
\begin{definition}[Regular matrix pencil]
   \label{def:preliminaries-1-Regular-Matrix-Pencil}
  Let $\mat{A}$ and $\mat{B}$ be $N\times N$ matrices, then the \textnormal{matrix pencil} is defined as the matrix $\mat{A}+\lambda\mat{B}$ for some complex parameter $\lambda$. The matrix pencil $\mat{A}+\lambda\mat{B}$ is said to be \textnormal{regular} if its determinant, denoted $\det\left(\mat{A}+\lambda\mat{B}\right)$, is not identically zero as a function of $\lambda$.
\end{definition}
\begin{theorem}[DAE solvability and regularity of its pencil \cite{brenan96:_numer_solut_of_initial_value}]
  \label{def:preliminaries-1-Regular-means-solvable}
  The DAE system (\ref{eq:The-DAE}) is solvable if and only if the matrix pencil $\mat{G}+\lambda\mat{C}$ is regular.
\end{theorem}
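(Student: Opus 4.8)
The plan is to reduce both implications to the Weierstrass--Kronecker canonical form of the matrix pencil $\mat{G}+\lambda\mat{C}$. First I would recall that for \emph{any} pair of $N\times N$ matrices there exist nonsingular $\mat{P}$ and $\mat{Q}$ bringing $(\mat{C},\mat{G})$ to a block-diagonal Kronecker form, and that the pencil is regular precisely when this form contains no singular (rectangular) blocks, in which case it specializes to the Weierstrass form $\mat{P}\mat{C}\mat{Q}=\operatorname{diag}(\mat{I},\mat{N})$ and $\mat{P}\mat{G}\mat{Q}=\operatorname{diag}(\mat{W},\mat{I})$ with $\mat{N}$ nilpotent. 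The change of variables $\mat{x}=\mat{Q}\mat{y}$ followed by left multiplication by $\mat{P}$ decouples (\ref{eq:The-DAE}) into an explicit ODE block and a purely algebraic block, and this decoupling is the engine behind both directions.

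For the ``if'' direction (regular $\Rightarrow$ solvable) I would work in the Weierstrass coordinates. Writing $\mat{y}=(\mat{y}_1,\mat{y}_2)$ and $\mat{P}\mat{b}=(\mats{\beta}_1,\mats{\beta}_2)$, the top block reads $\dot{\mat{y}}_1+\mat{W}\mat{y}_1=\mats{\beta}_1$, an ODE whose solution carries a free initial value, while the bottom block $\mat{N}\dot{\mat{y}}_2+\mat{y}_2=\mats{\beta}_2$ is solved in closed form by the terminating series $\mat{y}_2=\sum_{i=0}^{\nu-1}(-1)^i\mat{N}^i\mats{\beta}_2^{(i)}$, obtained by back-substitution and truncated using $\mat{N}^\nu=\mat{0}$ (Definition \ref{def:preliminaries-1-Matrix Nilpotency Index}), where $\nu$ is the nilpotency index. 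Crucially, $\mat{y}_2$ is fixed \emph{uniquely} by $\mat{b}$ and its first $\nu-1$ derivatives, carrying no free constants. Setting $\phi(t,c)=\mat{Q}\mat{y}$, with $c$ ranging over the free initial data of $\mat{y}_1$, I would then verify the four clauses of Definition \ref{def:preliminaries-1-Solvability} directly: global definition on $\mathcal{I}$, the graph condition, and that $r$ equals the size of the ODE block so that the family is an $r$-dimensional manifold.

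For the ``only if'' direction I would argue the contrapositive: if $\det(\mat{G}+\lambda\mat{C})\equiv 0$, then the Kronecker form must contain at least one singular block, either a right-singular ($L_\varepsilon$) block or a left-singular ($L_\eta^{\top}$) block. A right-singular block yields a component of the transformed system that is underdetermined, so an arbitrary scalar function of $t$ may be added to any solution; this produces a family that cannot be parametrized by finitely many constants $c\in\mathbb{R}^r$, contradicting the uniqueness and manifold clauses of solvability. A left-singular block instead imposes an algebraic consistency condition on $\mat{b}$ and its derivatives that fails for generic right-hand sides, so existence breaks down. In either case solvability fails, completing the equivalence.

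The step I expect to be the main obstacle is the bookkeeping in the ``only if'' direction: translating the structure of the singular Kronecker blocks into the precise failure of one of the four clauses of Definition \ref{def:preliminaries-1-Solvability}, and in particular confirming that the ``free function'' phenomenon is genuinely incompatible with a finite-dimensional solution manifold. A secondary technical point is the smoothness hypothesis --- the closed form for $\mat{y}_2$ requires $\mat{b}\in C^{\nu-1}$ --- which must be carried along so that $\phi(t,c)$ is defined on all of $\mathcal{I}$ as the definition demands.
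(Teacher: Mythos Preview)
The paper does not prove this theorem at all: it is stated in the Preliminaries section as a known result and attributed to \cite{brenan96:_numer_solut_of_initial_value}, with no argument given. So there is no ``paper's own proof'' to compare against.

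That said, your sketch is the standard route taken in the cited reference. Reducing to the Weierstrass form for the regular case and exhibiting the closed-form $\mat{y}_2=\sum_{i=0}^{\nu-1}(-1)^i\mat{N}^i\mats{\beta}_2^{(i)}$ is exactly the computation the paper itself invokes later in \eqref{eq:exact-solution-of-the-algebraic}, and handling the singular case via the Kronecker blocks $L_\varepsilon$ and $L_\eta^\top$ is how Brenan--Campbell--Petzold argue the contrapositive. Your identification of the delicate step is accurate: matching the free-function phenomenon from an $L_\varepsilon$ block to the failure of clause~(3) and the manifold clause of Definition~\ref{def:preliminaries-1-Solvability} requires care, because one must show that \emph{no} finite-dimensional reparametrization can absorb the free function. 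The smoothness caveat on $\mat{b}$ is also well spotted and is typically handled by assuming $\mat{b}$ sufficiently differentiable on $\mathcal{I}$ from the outset.
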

\begin{theorem} [Weierstrass transform \cite{campbell80:_singul_system_differ_equat,kunkel06:_differ_algeb_equat_analy_and_numer_solut}]
  \label{them:Canonincal-forms}
  Suppose that matrices $\mat{A},\mat{B}\in\mathbb{R}^{q \times q}$ are real matrices and $\mat{A}+\lambda \mat{B}$ is a regular pencil, then there exist non-singular real matrices $\mat{P},\mat{Q}\in\mathbb{R}^{q\times q}$ such that
  \begin{equation}
    \label{eq:theorem-1-equation}
    \mat{P}\mat{A}\mat{Q} =
    \left[
      \begin{array}[c]{c c}
        \mat{I}_r & \mat{0}\\
        \mat{0} & \mat{N}
      \end{array}
    \right], \quad
    \mat{P}\mat{B}\mat{Q} =
    \left[
      \begin{array}[c]{c c}
        \mat{J} & \mat{0}\\
        \mat{0} & \mat{I}_s
      \end{array}
    \right],
  \end{equation}
  where $\mat{N}\in \mathbb{R}^{s\times s}$ is a nilpotent matrix with nilpotency index $k$, $\mat{J}\in\mathbb{R}^{r\times r}$ is a matrix in Jordan canonical form, with $r+s=q$. In case that $\mat{N}=\mat{0}$ then define $k=1$. In the special case that $\mat{A}$ is nonsingular, then take $\mat{P}\mat{A}\mat{Q}=\mat{I}_q$, $\mat{P}\mat{B}\mat{Q}=\mat{J}$ and define $k=0$. If $\mat{A}+\lambda \mat{B}$ is identically constant, then (\ref{eq:theorem-1-equation}) simplifies to $\mat{P}\mat{A}\mat{Q}=\mat{N}$ and $\mat{P}\mat{B}\mat{Q}=\mat{I}_q$. The matrix pair $\mat{P}$ and $\mat{Q}$ are referred to as the \textnormal{Weierstrass transform} of $\mat{A}$ and $\mat{B}$.
\end{theorem}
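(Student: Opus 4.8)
The plan is to reduce the pair $(\mat{A},\mat{B})$ to a \emph{commuting} pair by a single preliminary left multiplication, and then to split $\mathbb{R}^q$ into two invariant subspaces on which the required normalizations can be read off directly. Since the pencil is regular, $\det(\mat{A}+\lambda\mat{B})$ is a polynomial in $\lambda$ that is not identically zero and of degree at most $q$; hence it has at most $q$ complex roots, and because $\mathbb{R}$ is infinite I can choose a real scalar $c$ that is not a root, so that $\mat{A}+c\mat{B}$ is nonsingular. Setting $\mat{S}=(\mat{A}+c\mat{B})^{-1}$, $\mathat{A}=\mat{S}\mat{A}$ and $\mathat{B}=\mat{S}\mat{B}$, the key identity $\mathat{A}+c\mathat{B}=\mat{S}(\mat{A}+c\mat{B})=\mat{I}_q$ holds, so $\mathat{A}=\mat{I}_q-c\mathat{B}$ is a polynomial in $\mathat{B}$ and therefore $\mathat{A}\mathat{B}=\mathat{B}\mathat{A}$. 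The degenerate cases listed in the statement---$\mat{A}$ nonsingular, $\mat{N}=\mat{0}$, or a constant pencil---are disposed of separately by taking $c=0$ or by inspecting directly which of $\mat{A},\mat{B}$ is invertible.

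Next I would perform the Fitting decomposition of the real matrix $\mathat{A}$ at the eigenvalue $0$: write $\mathbb{R}^q=V\oplus U$ with $V=\operatorname{im}\mathat{A}^{\,q}$ and $U=\ker\mathat{A}^{\,q}$, so that $\mathat{A}$ restricts to an invertible map on $V$ and to a nilpotent map on $U$. Both summands are real and, because $\mathat{A}$ and $\mathat{B}$ commute, both are invariant under $\mathat{B}$ as well. The crucial cross-invertibility then follows from the identity of the first step: on $U$ the restriction $\mathat{A}|_U$ is nilpotent, hence $\mathat{B}|_U=c^{-1}(\mat{I}-\mathat{A})|_U$ is \emph{invertible}. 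Collecting a real basis of $V$ followed by a real basis of $U$ into a nonsingular matrix $\mat{T}$ simultaneously block-diagonalizes $\mathat{A}$ and $\mathat{B}$, with the $V$-block of size $r$ and the $U$-block of size $s=q-r$.

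It then remains to normalize each block. On the $V$-block the $\mathat{A}$-part is invertible, so left multiplication by its inverse turns the $\mat{A}$-part into $\mat{I}_r$, and a further real similarity placing the resulting $\mat{B}$-part into real Jordan form produces $\mat{J}$. On the $U$-block the $\mathat{B}$-part is invertible, so left multiplication by its inverse turns the $\mat{B}$-part into $\mat{I}_s$; the resulting $\mat{A}$-part is the product of $(\mathat{B}|_U)^{-1}$ with the nilpotent $\mathat{A}|_U$, and since these commute the product is again nilpotent, so a real similarity brings it to a nilpotent Jordan form $\mat{N}$, whose nilpotency index I would call $k$. Folding $\mat{S}$, $\mat{T}^{-1}$ and the two block left multiplications into a single real nonsingular $\mat{P}$, and folding $\mat{T}$ together with the two real similarity transforms into a single real nonsingular $\mat{Q}$, yields exactly the asserted form.

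I expect the main obstacle to be the simultaneous reduction in the middle step: one must verify that the \emph{same} splitting $V\oplus U$ makes $\mat{A}$ invertible on one summand and $\mat{B}$ invertible on the other, and that the normalized leading block is genuinely nilpotent. Every one of these facts rests on the commutativity identity $\mathat{A}+c\mathat{B}=\mat{I}_q$ established at the outset, so the real work lies in choosing the reducing multiplication correctly rather than in the subsequent Jordan-form bookkeeping. A secondary point requiring care is reality: the Fitting subspaces of a real matrix are themselves real and the real Jordan canonical form exists over $\mathbb{R}$, which is what keeps $\mat{P}$ and $\mat{Q}$ real throughout.
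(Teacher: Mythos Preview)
The paper does not supply its own proof of this theorem; it is quoted as a classical result from the cited references (Campbell 1980; Kunkel--Mehrmann 2006), so there is no in-paper argument to compare against.

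Your outline is correct and is essentially one of the standard routes to the Weierstrass canonical form. The reduction step---premultiplying by $(\mat{A}+c\mat{B})^{-1}$ for a real $c$ avoiding the finitely many roots of $\det(\mat{A}+\lambda\mat{B})$---is the usual device for forcing the transformed pair to commute, and the Fitting decomposition of $\mathat{A}$ at the eigenvalue $0$ then does exactly the splitting you need. Your check that $\mathat{B}|_U$ is invertible because $\mathat{B}|_U=c^{-1}(\mat{I}-\mathat{A}|_U)$ with $\mathat{A}|_U$ nilpotent is the right observation, and the nilpotence of $(\mathat{B}|_U)^{-1}\mathat{A}|_U$ follows cleanly from commutativity as you say. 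Two minor points worth tightening in a full write-up: first, make explicit that in the generic case you take $c\neq 0$ (you need this to invert $c$ in the formula for $\mathat{B}|_U$; you already flag that $c=0$ is reserved for the nonsingular-$\mat{A}$ subcase). Second, the phrase ``Jordan canonical form'' in the statement, combined with the requirement that $\mat{P},\mat{Q}$ be real, forces the \emph{real} Jordan form, which you correctly invoke; just be sure the reader sees that the similarity producing $\mat{J}$ on the $V$-block is real.
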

\begin{theorem}[Equivalence between differentiation and nilpotency index \cite{brenan96:_numer_solut_of_initial_value}]
  \label{them:diff-index-is-nilpotency-index}
  Suppose the DAE system in \eqref{eq:The-DAE} is solvable with differentiation index $\mu\geq 1$, and therefore there exist matrices $\mat{P}$ and $\mat{Q}$ satisfying \eqref{eq:theorem-1-equation}. Then the nilpotecy index of the resulting $\mat{N}$ matrix is equal to $\mu$.
\end{theorem}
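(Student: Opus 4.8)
The plan is to reduce the DAE to its Weierstrass canonical form and then track exactly how many differentiations are needed to expose $\dot{\mat{x}}$ in the decoupled coordinates. Solvability guarantees, through Theorem~\ref{def:preliminaries-1-Regular-means-solvable}, that the pencil $\mat{G}+\lambda\mat{C}$ is regular, hence so is $\mat{C}+\lambda\mat{G}$, and Theorem~\ref{them:Canonincal-forms} (applied with the roles $\mat{A}=\mat{C}$, $\mat{B}=\mat{G}$) supplies nonsingular \emph{constant} matrices $\mat{P},\mat{Q}$ that put $\mat{C}$ and $\mat{G}$ into the block forms of \eqref{eq:theorem-1-equation}. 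First I would introduce the state transformation $\mat{x}=\mat{Q}\mat{y}$, partition $\mat{y}=\left[\mat{y}_1^\top\ \mat{y}_2^\top\right]^\top$ conformally with the blocks $\mat{I}_r$ and $\mat{N}$, and left-multiply \eqref{eq:The-DAE} by $\mat{P}$. Writing $\mat{P}\mat{b}=\left[\mat{w}_1^\top\ \mat{w}_2^\top\right]^\top$, this decouples the system into an ordinary part $\dot{\mat{y}}_1+\mat{J}\mat{y}_1=\mat{w}_1(t)$ and a nilpotent part $\mat{N}\dot{\mat{y}}_2+\mat{y}_2=\mat{w}_2(t)$. Because $\mat{P}$ and $\mat{Q}$ are constant and nonsingular, expressing $\dot{\mat{x}}$ explicitly in terms of $\mat{x}$ and $t$ is equivalent to expressing $\dot{\mat{y}}$ in terms of $\mat{y}$ and $t$, and differentiating the transformed equations $p$ times is precisely differentiating the original DAE $p$ times; thus the differentiation index is invariant under the reduction.

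The ordinary block is already solved for its derivative, $\dot{\mat{y}}_1=-\mat{J}\mat{y}_1+\mat{w}_1(t)$, and so contributes nothing to the count; the entire index arises from the nilpotent block. The key step is a telescoping substitution: differentiating $\mat{N}\dot{\mat{y}}_2+\mat{y}_2=\mat{w}_2$ repeatedly and back-substituting the successive relations $\mat{y}_2^{(p)}=\mat{w}_2^{(p)}-\mat{N}\mat{y}_2^{(p+1)}$ yields
\begin{equation}
\dot{\mat{y}}_2=\sum_{j=0}^{k-1}(-1)^j\mat{N}^j\mat{w}_2^{(j+1)}(t)+(-1)^k\mat{N}^k\mat{y}_2^{(k+1)}.
\end{equation}
Since $\mat{N}^k=\mat{0}$, the remainder vanishes and $\dot{\mat{y}}_2$ becomes a closed-form function of $t$ involving the forcing derivatives only up to $\mat{w}_2^{(k)}$. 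As $\mat{w}_2$ is the lower block of $\mat{P}\mat{b}$, obtaining $\mat{w}_2^{(k)}$ requires differentiating \eqref{eq:The-DAE} exactly $k$ times, which establishes the upper bound $\mu\le k$.

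The matching lower bound is where I expect the main difficulty. Here I would use that the nilpotency index is \emph{exactly} $k$, i.e. $\mat{N}^{k-1}\neq\mat{0}$, so the coefficient $(-1)^{k-1}\mat{N}^{k-1}$ multiplying $\mat{w}_2^{(k)}$ in the displayed expression is genuinely nonzero. Because the forcing $\mat{b}(t)$, and hence $\mat{w}_2(t)$, is an arbitrary sufficiently smooth function, the term $\mat{N}^{k-1}\mat{w}_2^{(k)}$ cannot be re-expressed using only $\mat{y}_2$ and the lower derivatives $\mat{w}_2,\dot{\mat{w}}_2,\ldots,\mat{w}_2^{(k-1)}$ available after $k-1$ differentiations. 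The cleanest way to make this rigorous is to collapse to a single $k\times k$ nilpotent Jordan block and read the scalar equations from the last row upward: the bottom equation $y_{2,k}=w_{2,k}$ is algebraic, and each step up the chain forces one further differentiation, so the top component $\dot{y}_{2,1}$ depends on the $k$-th derivative of the forcing and is therefore unavailable after fewer than $k$ differentiations. This gives $\mu\ge k$, and together with the upper bound we conclude $\mu=k$, the nilpotency index of $\mat{N}$. The delicate point throughout is arguing that the highest-derivative term cannot be annihilated by any linear combination of the differentiated equations, for which the non-vanishing of $\mat{N}^{k-1}$ combined with the arbitrariness of $\mat{b}(t)$ is the essential ingredient.
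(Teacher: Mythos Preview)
The paper does not supply its own proof of this theorem: it is listed among the preliminaries with a citation to Brenan--Campbell--Petzold and is used as a black box in the subsequent analysis. There is therefore nothing in the paper to compare your argument against beyond the bare statement.

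That said, your sketch is essentially the standard textbook argument one finds in the cited reference. The reduction to Weierstrass form, the observation that the ODE block contributes nothing to the count, and the telescoping identity for the nilpotent block giving $\mu\le k$ are all correct and cleanly stated. For the lower bound, your instinct to pass to a single $k\times k$ Jordan block and read the chain from the bottom row upward is exactly the right move; the one point that deserves a sentence more of care is your appeal to the ``arbitrariness'' of $\mat{b}(t)$. The differentiation index as defined in the paper is attached to a particular DAE, not to the pencil alone, so strictly speaking you should either note that for linear constant-coefficient systems the index is a property of the pencil $(\mat{C},\mat{G})$ independent of the forcing (which is the convention in the cited book), or exhibit a specific smooth $\mat{b}$ for which $\mat{N}^{k-1}\mat{w}_2^{(k)}$ is not expressible through lower-order data. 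Either route closes the gap; as written the lower-bound step is a plan rather than a proof, which you yourself flag.
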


\begin{lemma}[Order of the truncation \cite{gear71:_numer}]
  \label{lemma:Obreshkov-Order}
  Let $z(t)$ be a continuously differentiable function in $t$ and let $h=t_{n}-t_{n-1}$. Then,
  \begin{equation}
    \label{eq:Lemma-Obreshkov-error}
     \sum_{i=0}^m (-1)^i\alpha_{i,l,m} h^i\left. \frac{\mathrm{d}^iz(t)}{\mathrm{d}t^i}\right|_{t=t_n}- \sum_{i=0}^l \alpha_{i,m,l} h^i \left. \frac{\mathrm{d}^iz(t)}{\mathrm{d}t^i}\right|_{t=t_{n-1}} = \mathcal{O}\left(h^{l+m+1}\right)
  \end{equation}
\end{lemma}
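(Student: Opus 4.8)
The plan is to exploit the linearity of the operator on the left-hand side of \eqref{eq:Lemma-Obreshkov-error}, reduce the whole estimate to an algebraic statement about its coefficients, and then verify that statement by recognizing the Pad\'e structure carried by the factors $\alpha_{i,l,m}$ in \eqref{eq:The-aplha-factors}.

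First I would Taylor-expand each $z^{(i)}(t_n)=z^{(i)}(t_{n-1}+h)$ about $t_{n-1}$, substitute into the first sum, and regroup by powers of $h$. Adopting the natural convention that $\alpha_{i,l,m}=0$ for $i>m$ and $\alpha_{k,m,l}=0$ for $k>l$ (both forced by the binomial factor in \eqref{eq:The-aplha-factors}), the entire left-hand side collapses to $\sum_{k\ge 0} c_k\,h^k\,z^{(k)}(t_{n-1})$ with
\[
c_k=\sum_{i=0}^{k}\frac{(-1)^i}{(k-i)!}\,\alpha_{i,l,m}-\alpha_{k,m,l}.
\]
Since the surviving tail $\sum_{k\ge l+m+1}c_k h^k z^{(k)}(t_{n-1})$ is manifestly $\mathcal{O}(h^{l+m+1})$ by Taylor's theorem with remainder (using that $z$ is smooth enough on $[t_{n-1},t_n]$ for the formula to even be meaningful), the lemma reduces to the purely algebraic claim that $c_k=0$ for every $0\le k\le l+m$.

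To dispatch all of these coefficients simultaneously I would introduce a generating variable: testing the functional on $z(t)=e^{\lambda(t-t_{n-1})}$ and setting $q=\lambda h$ turns $\sum_k c_k q^k$ into $e^{q}P_m(-q)-P_l(q)$, where $P_m(-q)=\sum_{i=0}^m\alpha_{i,l,m}(-q)^i$ and $P_l(q)=\sum_{i=0}^l\alpha_{i,m,l}q^i$. From the explicit values in \eqref{eq:The-aplha-factors}, $P_m(-q)$ and $P_l(q)$ are exactly the denominator and numerator of the $(l,m)$ Pad\'e approximant of $e^{q}$, whose defining accuracy relation gives $e^{q}P_m(-q)-P_l(q)=\mathcal{O}(q^{l+m+1})$; matching Taylor coefficients then yields $c_k=0$ for all $k\le l+m$.

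The main obstacle is precisely this vanishing, that is, the identification of the $\alpha$'s as Pad\'e data. If I preferred a self-contained argument rather than invoking Pad\'e theory as a black box, I would instead prove directly the equivalent identity
\[
\sum_{i=0}^{k}(-1)^i\frac{(l+m-i)!}{(k-i)!}\binom{m}{i}=(l+m-k)!\binom{l}{k},\qquad 0\le k\le l+m,
\]
for instance by induction on $k$ (or by a Vandermonde-type argument), with the binomials understood to vanish outside their natural range. A single such identity covers all of the sub-ranges $k\le\min(l,m)$, $\min(l,m)<k\le\max(l,m)$, and $\max(l,m)<k\le l+m$ at once, so the only genuine work is establishing it; once it holds, the $\mathcal{O}(h^{l+m+1})$ bound follows immediately from the reduction above. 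This is why I would favour the generating-function viewpoint, which handles the awkward case bookkeeping uniformly.
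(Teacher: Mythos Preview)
The paper does not actually prove this lemma: it is listed among the preliminaries in Section~\ref{sec:preliminaries} with a citation to Gear and is used thereafter as a known result, so there is no ``paper's own proof'' to compare against.

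That said, your proposal is a correct and standard self-contained argument. The Taylor reduction to the vanishing of the coefficients $c_k$ for $0\le k\le l+m$ is straightforward, and your identification of $P_m(-q)=\sum_{i=0}^m\alpha_{i,l,m}(-q)^i$ and $P_l(q)=\sum_{i=0}^l\alpha_{i,m,l}q^i$ with the denominator and numerator of the $(l,m)$ Pad\'e approximant of $e^q$ is exactly right once one reads off \eqref{eq:The-aplha-factors}; the defining accuracy property $e^qP_m(-q)-P_l(q)=\mathcal{O}(q^{l+m+1})$ then finishes it. The only minor caveat is that the lemma as stated assumes $z$ merely ``continuously differentiable,'' whereas your argument (and indeed any proof yielding an $\mathcal{O}(h^{l+m+1})$ remainder) tacitly requires $z\in C^{l+m+1}$ on $[t_{n-1},t_n]$; this is a looseness in the paper's hypothesis rather than a flaw in your reasoning.
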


\subsection{Overview of the Analysis}
The analysis presented in this section is developed along three main steps.
\begin{enumerate}
\item The first step will utilize a Weierstrass tranformation on the matrices $\mat{C}$ and $\mat{G}$ which decouples the DAE in (\ref{eq:The-DAE}) into two subsystems: the first takes the form of an ODE while the other subsystem is a purely algebraic subsystem. This step will result in decomposing the state $\mat{x}(t)$ into two components: the first is a solution to the ODE subsystem, which will be denoted $\mat{z}_{\mathrm{D}}(t)$, and the other is a solution to the algebraic subsystem which is denoted by $\mat{z}_{\mathrm{A}}(t)$.
\item In the second step, another Weierstrass tranformation is applied to discrete system matrices in (\ref{eq:The Augmented MNA for the Obreshov formula}) $\mattilde{C}$ and $\mattilde{G}$ decoupling it into two discrete systems and decomposing $\mathats{\xi}_n$ into two components, which will be denoted by $\mathats{\zeta}_{D,n}$ and  $\mathats{\zeta}_{A,n}$. It will be demonstrated that $\mathats{\zeta}_{D,n}$ and $\mathats{\zeta}_{A,n} $ carry the approximations to $\mat{z}_{\mathrm{D}}(t)$, and $\mat{z}_{\mathrm{A}}(t)$, and their derivatives, up to the $m^{\mathrm{th}}$ order derivatives (at $t=t_n$), to varying orders of the step size $h$.
  \item The third step will then link the entries of $\mathats{\zeta}_{D,n}$ and $\mathats{\zeta}_{A,n} $ to the entries in $\mathats{\xi}_n$ providing the path to describe the approximations generated by the Obreshkov method in $\mathats{\xi}_n$ in terms of the exact $\mat{x}(t_n)$ and the step size $h$.
\end{enumerate}

Moving forward, it will be assumed that the past time step is devoid of error, that is,
\begin{equation}
  \label{eq:past-time-step-is-accurate}
  h^i\mathat{x}_{n-1}^{(i)} = h^i\mat{x}^{(i)}\left(t_{n-1}\right),\quad i=0,1,\cdots,l
\end{equation}
so that the error derived error at $t=t_n$ reflects only the ``local'' approximation error committed in a single  time step. It will also be convenient to group all the exact derivatives scaled by powers of the step size $h$ in a single vector $\mats{\xi}\left(t\right)$, defined by
\begin{equation}
  \label{eq:Def-of-xi-of-t}
 \mats{\xi}(t)\coloneqq
                \left[
                \begin{array}[c]{c c c c}
                  \left(\mat{x}^{(0)}(t)\right)^{\top} & \left(h\mat{x}^{(1)}(t)\right)^{\top} &\cdots &\left(h^m\mat{x}^{(m)}(t)\right)^{\top}
                \end{array}
                \right]^{\top}  
\end{equation}

The previous steps are detailed in the following subsections.

\subsection{Step 1. The ODE and Algebraic Subsystems}
\label{sec:step1.-ode-algebraic}
Assuming the DAE (\ref{eq:The-DAE}) system is solvable, then it follows by Theorem \ref{them:Canonincal-forms} that are nonsingular matrices $\mat{P}$ and $\mat{Q}$ that can be used in a Weierstrass transform on $\mat{C}$ and $\mat{G}$. To this end, $\mat{Q}$ is used in following the change of variables $\mat{x}(t)\rightarrow\mat{z}(t)$ in (\ref{eq:The-DAE})
\begin{equation}
  \label{eq:change-ofvaiables-1}
  \mat{z}(t)\coloneqq \mat{Q}^{-1}\mat{x}(t),
\end{equation}
which, after pre-multiplying the DAE system (\ref{eq:The-DAE}) by $\mat{P}$ decouples it into (using (\ref{eq:theorem-1-equation}))
\begin{eqnarray}
  \label{eq:the decoupled MNA-diff-part}
  \frac{\textnormal{d}\mat{z}_D(t)}{\textnormal{d}t} &=& -\mat{J}\mat{z}_D(t)  + \mat{u}_{D}(t)\\
  \label{eq:the decoupled MNA-algeb-part}
  \mat{N}\frac{\textnormal{d}\mat{z}_A(t)}{\textnormal{d}t} &=& -\mat{z}_A(t)\phantom{\mat{J}} + \mat{u}_{A}(t)
\end{eqnarray}
where $\mat{z}_D(t),\mat{u}_D(t)\in\mathbb{R}^r$ and $\mat{z}_A(t),\mat{u}_A(t)\in\mathbb{R}^s$ that are obtained from 
\begin{equation}
  \label{eq:def-of-the-decoupled-system}
  \left[
    \begin{array}[c]{c c}
          \mat{u}_D(t)^{\top} & \mat{u}_A(t)^{\top}
    \end{array}
  \right]^{\top} = \mat{P}\mat{b}(t), \quad \quad
  \left[
    \begin{array}[c]{c c}
          \mat{z}_D(t)^{\top} & \mat{z}_A(t)^{\top}
    \end{array}
  \right]^{\top} =
  \mat{z}(t)
\end{equation}
It is obvious that (\ref{eq:the decoupled MNA-diff-part}) is a classical ODE whose solution is uniquely determined based on the initial condition at $t = 0$, and the stimulus $\mat{u}_D(t)$. However, the second part of (\ref{eq:the decoupled MNA-algeb-part}) is a purely algebraic part whose solution depends solely on the driving stimulus $\mat{b}(t)$ and its derivatives. This fact can be demonstrated when the solution to (\ref{eq:the decoupled MNA-algeb-part}) is written explicitly, as explained in \cite{brenan96:_numer_solut_of_initial_value}, in the following form
\begin{equation}
  \label{eq:exact-solution-of-the-algebraic}
  \mat{z}_A(t) = \sum_{i=0}^{k-1} (-1)^i\mat{N}^i \frac{\mathrm{d}^i}{\mathrm{d}t^i}\left(\mat{u}_A(t)\right) 
\end{equation}
indicating that $\mat{z}_A(t)$ for any time $t$ are determined solely from $\mat{b}(t)$ and its derivatives, at the same time $t$.

For convenience and later usage, let $\mat{Q}_D$ mark the fist $k$ columns of $\mat{Q}$ and $\mat{Q}_A$ its remaining ($s=N-k$) columns. Thus,
\begin{equation}
  \label{eq:z-of-t-as-function-of-z}
  \mat{x}(t) = \mat{Q}_D\mat{z}_D(t) + \mat{Q}_A\mat{z}_A(t)
\end{equation}

It would also be convenient for the purposes of the following analysis to stack the vectors $\mat{z}(t)$, $\mat{z}_D(t)$ and $\mat{z}_A(t)$ and their high-order derivatives (scaled by powers of $h$) in single vectors $\mats{\zeta}(t)$, $\mats{\zeta}_D(t)$, $\mats{\zeta}_A(t)$ defined, respectively, by
\begin{eqnarray}
  \label{eq:Def-of-xi-and-zeta}
  \mats{\zeta}(t)&\coloneqq&
                \left[
                \begin{array}[c]{c c c c}
                  \left(\mat{z}^{(0)}(t)\right)^{\top} & \left(h\mat{z}^{(1)}(t)\right)^{\top} &\cdots &\left(h^m\mat{z}^{(m)}(t)\right)^{\top}
                \end{array}
                \right]^{\top}\\
  \label{eq:Def-of-zeta-D}
  \mats{\zeta}_D(t)&\coloneqq&
                \left[
                \begin{array}[c]{c c c c}
                  \left(\mat{z}_D^{(0)}(t)\right)^{\top} & \left(h\mat{z}_D^{(1)}(t)\right)^{\top} &\cdots &\left(h^m\mat{z}_D^{(m)}(t)\right)^{\top}
                \end{array}
                                                                                                             \right]^{\top}\\
  \label{eq:Def-of-zeta-A}
               \mats{\zeta}_A(t)&\coloneqq&
                \left[
                \begin{array}[c]{c c c c}
                  \left(\mat{z}_A^{(0)}(t)\right)^{\top} & \left(h\mat{z}_A^{(1)}(t)\right)^{\top} &\cdots &\left(h^m\mat{z}_A^{(m)}(t)\right)^{\top}
                \end{array}
                \right]^{\top}
\end{eqnarray}
Using the Kronecker (tensor) operator $\otimes$, it is obvious from \eqref{eq:Def-of-xi-of-t} and the above that
\begin{equation}
  \label{eq:xi-as-function-of-zeta}
  \mats{\xi}(t) = \left(\mat{I}_{m+1}\otimes \mat{Q}_D\right)\mats{\zeta}_D(t) + \left(\mat{I}_{m+1}\otimes \mat{Q}_A\right)\mats{\zeta}_A(t)
\end{equation}
\subsection{Step 2. Applying the Weierstrass transform on the Discrete System}
\label{sec:step-2.-applying}
In this step, the matrices $\mattilde{P},\mattilde{Q}\in\mathbb{R}^{(m+1)N\times (m+1)N}$ defined by
\begin{equation}
  \label{eq:def-of-P-tildeand Q-tilde}
  \mattilde{P} \coloneqq
  \left[
    \begin{array}[c]{c c c c c }
      \mat{P} & \mat{0} & \cdots  &        & \mat{0}\\
      \mat{0} & \mat{P} & \mat{0} & \cdots & \mat{0}\\
      \vdots  & \vdots  & \ddots  &        & \mat{\vdots}\\
      \mat{0} & \mat{0} &  \cdots & \mat{P}& \mat{0}\\
      \mat{0} & \mat{0} &         & \cdots & \mat{Q}^{-1}
    \end{array}
  \right],\quad
  \mattilde{Q} \coloneqq
    \left[
      \begin{array}[c]{c c c c c }
        \mat{Q} & \mat{0} & \cdots  &        & \mat{0}\\
        \mat{0} & \mat{Q} & \mat{0} & \cdots & \mat{0}\\
        \vdots  & \vdots  & \ddots  &        & \mat{\vdots}\\
        \mat{0} & \mat{0} &  \cdots & \mat{Q}& \mat{0}\\
        \mat{0} & \mat{0} &         & \cdots & \mat{Q}
      \end{array}
    \right]
  \end{equation}
  are used to Weierstrass transform the matrices $\mattilde{C}$ and $\mattilde{G}$. This is carried out through performing the change of variables $\mathats{\xi}_n\rightarrow\mathats{\zeta}_n$ given by
  \begin{eqnarray}
    \label{eq:augmented-change-of-vars}
    \mathats{\zeta}_{n} &\coloneqq& \mattilde{Q}^{-1}\mathats{\xi}_{n}\nonumber\\
                     &=& \left[
                         \begin{array}[c ]{c c c c}
                           \left(\mat{Q}^{-1} \mathat{x}_n^{(0)}\right) ^{\top}&
                           \left(h\mat{Q}^{-1} \mathat{x}_n^{(1)}\right) ^{\top}&
                                                                                                                            \cdots &
                            \left(h^m\mat{Q}^{-1} \mathat{x}_n^{(m)}\right) ^{\top}
                         \end{array}\right]^{\top}
  \end{eqnarray}
  Define vectors $\mathat{z}_n^{(i)}\in\mathbb{R}^N$, $\mathat{z}_{n-1}^{(i)}\in\mathbb{R}^N$
  \begin{eqnarray}
    \label{eq:def-of-z-hat-n}
    \mathat{z}_n^{(i)}& \coloneqq &\mat{Q}^{-1} \mathat{x}_n^{(i)},\quad i=0,\cdots,m\\
    \label{eq:def-of-z-hat-n-1}
    \mathat{z}_{n-1}^{(i)}& \coloneqq &\mat{Q}^{-1} \mathat{x}_{n-1}^{(i)},\quad i=0,\cdots,l
  \end{eqnarray}
  and partition them into vectors of sizes $r$ and $s$,
  \begin{equation}
    \label{eq:partioniong-of-z-hat}
    \mathat{z}_n^{(i)} = \left[{\mathat{z}_{D,n}^{(i)}}^{\top} \quad {\mathat{z}_{A,n}^{(i)}}^{\top}\right]^{\top},\quad \mathat{z}_{n-1}^{(i)} = \left[{\mathat{z}_{D,n-1}^{(i)}}^{\top} \quad {\mathat{z}_{A,n-1}^{(i)}}^{\top}\right]^{\top}
  \end{equation}
  
  Next the system (\ref{eq:The Augmented MNA for the Obreshov formula}), is pre-multiplied by the matrix $\mattilde{P}$, which along with the change of variables in (\ref{eq:augmented-change-of-vars}) decouples it into the following two systems
\begin{eqnarray}
  \label{eq:decoupled-discretized-differential-part-diff}
  \mat{K}\mathats{\zeta}_{D,n} &=&  \mat{e}_{D,n}\\
  \label{eq:decoupled-discretized-differential-part-algeb}
  \mat{M}\mathats{\zeta}_{A,n} &=&  \mat{e}_{A,n},
\end{eqnarray}
where the vectors $\mathats{\zeta}_{D,n} \in\mathbb{R}^{(m+1)r}$ and $\mathats{\zeta}_{A,n} \in\mathbb{R}^{(m+1)s}$ group the vectors $\mathat{z}_{D,n}^{(i)}$ and $\mathat{z}_{A,n}^{(i)}$, respectively, i.e.,
\begin{eqnarray}
  \label{eq:Augmented-transformed}
  \mathats{\zeta}_{D,n} &=&  {\left[
    \begin{array}[c]{c c c c}
      {\mathat{z}^{(0)}_{D,n}}^{\top} & h{\mathat{z}^{(1)}_{D,n}}^{\top}& \cdots & h^m{\mathat{z}^{(m)}_{D,n}}^{\top}
    \end{array}
                                                                                   \right]}^{\top}\\
  \label{eq:Augmented-transformed-Alg}
  \mathats{\zeta}_{A,n} &=&  {\left[
      \begin{array}[c]{c c c c}
      {\mathat{z}^{(0)}_{A,n}}^{\top} & h{\mathat{z}^{(1)}_{A,n}}^{\top}& \cdots & h^m{\mathat{z}^{(m)}_{A,n}}^{\top}
    \end{array}
                                                                                   \right]}^{\top}
\end{eqnarray}

Moreover the matrices $\mat{K}\in\mathbb{R}^{(m+1)r\times (m+1)r} $ and $\mat{M}\in\mathbb{R}^{(m+1)s\times (m+1)s} $, and the vectors $ \mat{e}_{D,n} \in\mathbb{R}^{(m+1)r}, \mat{e}_{A,n} \in\mathbb{R}^{(m+1)s}$ are, respectively, given by
{\small{
\begin{eqnarray}
  \label{eq:def of K }
  \mat{K} &=&
  \left[
    \begin{array}[c]{c c c c c }
      \mat{J}     & \frac{1}{h}\mat{I}_r     & \cdots  &        & \mat{0}\\
      \mat{0}     & \mat{J}     & \frac{1}{h}\mat{I}_r & \cdots & \mat{0}\\
      \vdots      & \vdots      & \ddots  &        & \mat{0}\\
      \mat{0}     & \mat{0}     & \cdots  & \mat{J}& \frac{1}{h}\mat{I}_r\\
      \alpha_{0,l,m}\mat{I}_r & -\alpha_{1,l,m}\mat{I}_r &         & \cdots & (-1)^{m}\alpha_{m,l,m}\mat{I}_r
    \end{array}
  \right]\\
  \label{eq:def of M}
   \mat{M} &=&
  \left[
    \begin{array}[c]{c c c c c }
      \mat{I}_s     & \frac{1}{h}\mat{N}     & \cdots  &        & \mat{0}\\
      \mat{0}     & \mat{I}_s     & \frac{1}{h}\mat{N} & \cdots & \mat{0}\\
      \vdots      & \vdots      & \ddots  &        & \mat{0}\\
      
      \mat{0}     & \mat{0}     & \cdots  & \mat{I}_s& \frac{1}{h}\mat{N}\\
      \alpha_{0,l,m}\mat{I}_s & -\alpha_{1,l,m}\mat{I}_s &         & \cdots & (-1)^{m}\alpha_{m,l,m}\mat{I}_s
    \end{array}
  \right]\\
  \label{eq:def-of-eD}\\
  \mat{e}_{D,n} &=&
   \left[
    \begin{array}[c]{c c c c }
      \left(\mat{u}_D^{(0)}(t_{n})\right)^{\top}&
                                                  \cdots&
                                                          \left(h^{m-1}\mat{u}_D^{(m-1)}(t_{n}) \right)^{\top}&
                                                                                                                \left(\sum_{i=0}^{l}\alpha_{i,m,l}h^{i}\mathat{z}_{D,n-1}^{(i)}\right)^{\top}
    \end{array}
                                                                                                                \right]^{\top
                                                                                                                },\nonumber\\
  \label{eq:def-of-eA}\\
  \mat{e}_{A,n} &=&
   \left[
    \begin{array}[c]{c c c c }
      \left(\mat{u}_A^{(0)}(t_{n})\right)^{\top}&
                                                  \cdots&
                                                          \left(h^{m-1}\mat{u}_A^{(m-1)}(t_{n}) \right)^{\top}&

                                                                                                                \left(\sum_{i=0}^{l}\alpha_{i,m,l}h^{i}\mathat{z}_{A,n-1}^{(i)}\right)^{\top}
    \end{array}
                                                                                                                \right]^{\top}\nonumber
\end{eqnarray}
}} 
In (\ref{eq:def-of-eD}) and (\ref{eq:def-of-eA}), the vectors $\mathat{z}_{D,n-1}^{(i)}$ $\mathat{z}_{A,n-1}^{(i)}$ are, $r$ and $s$ partitions, respectively, of the vector $\mathat{z}_{n-1}^{(i)}$, which is given by $\mathat{z}_{n-1}^{(i)} = \mat{Q}^{-1}\mathat{x}_{n-1}^{(i)} $, $i=0,\cdots,l$.
Similarly to \eqref{eq:xi-as-function-of-zeta}, $\mathats{\xi}_n $ can be expressed as,
\begin{equation}
  \label{eq: Def-of-xi-hat-and-zeta-hat}
  \mathats{\xi}_n = \left(\mat{I}_{m+1}\otimes \mat{Q}_D\right)\mathats{\zeta}_{D,n} + \left(\mat{I}_{m+1}\otimes \mat{Q}_A\right)\mathats{\zeta}_{A,n}
\end{equation}

\subsection{Step 3. Derivation of the Obreshkov Order of Approximation}
\label{sec:step-3.-derivation}
The task of characterizing the order of the Obreshkov method can be accomplished if the difference between the -exact- $\mats{\xi}\left(t_{n}\right)$ and the -Obreshkov-approximated- $\mathats{\xi}_n$ is derived showing its relation to the step size $h$. Using \eqref{eq:xi-as-function-of-zeta} and \eqref{eq: Def-of-xi-hat-and-zeta-hat}, this difference is given by
\begin{equation}
  \label{eq:difference-between-xi-and-xi-hat}
  \mathats{\xi}_n  - \mats{\xi}\left(t_n\right)  = \left(\mat{I}_{m+1}\otimes \mat{Q}_D\right)\left(\mathats{\zeta}_{D,n} - \mats{\zeta}_D\left(t_n\right) \right)+
   \left(\mat{I}_{m+1}\otimes \mat{Q}_A\right)\left(\mathats{\zeta}_{A,n} - \mats{\zeta}_A\left(t_n\right) \right)
\end{equation}
indicating that the approximation error ($\mathats{\xi}_n -\mats{\xi}\left(t_{n}\right)$) of the Obreshkov method results from two different components. The first component is the differential component which arises from $\mathats{\zeta}_{D,n} - \mats{\zeta}_D\left(t_n\right)$ while the other component is the one resulting from the algebraic component $\mathats{\zeta}_{A,n} - \mats{\zeta}_A\left(t_n\right)$. Characterizing each of those components is considered separately in the following subsections.

\subsubsection{The Algebraic Error Component}
\label{sec:algebr-comp-error}
Using \eqref{eq:the decoupled MNA-algeb-part}, multiplying by $h^i$ and differentiating both sides $i$ times with respect to $t$ yields
\begin{equation}
  \label{eq:proof-theorem-1}
  \frac{1}{h}\mat{N}\frac{\mathrm{d}^{i+1}}{\textnormal{d}t^{i+1}}\left(h^{i+1}\mat{z}_A(t) \right)= -\frac{\textnormal{d}^{i}}{\textnormal{d}t^{i}}\left(h^i\mat{z}_A(t) \right)+ \frac{\textnormal{d}^{i}}{\textnormal{d}t^{i}}\left(h^i\mat{u}_A(t)\right), \quad i = 0,\cdots, m-1
\end{equation}
Also, from Lemma \ref{lemma:Obreshkov-Order}, we have
\begin{equation}
  \label{eq:Obreshkov-order-applied}
   \sum_{i=0}^m (-1)^i\alpha_{i,l,m} h^i\left. \frac{\mathrm{d}^i\mat{z}_A(t)}{\mathrm{d}t^i}\right|_{t=t_n}=\sum_{i=0}^l \alpha_{i,m,l} h^i \left. \frac{\mathrm{d}^i\mat{z}_A(t)}{\mathrm{d}t^i}\right|_{t=t_{n-1}} +  \matcal{O}_s\left(h^{l+m+1}\right)
 \end{equation}

 The $m$ systems in \eqref{eq:proof-theorem-1} (at $t=t_n$) along with the system in \eqref{eq:Obreshkov-order-applied} can be put in a matrix form
 \begin{equation}
   \label{eq:System-of-zeta_n}
   \mat{M}\mats{\zeta}_A\left(t_n\right) = \mat{\Psi}
 \end{equation}
 where $\mat{M}$ is the matrix defined by \eqref{eq:def of M}, $\mats{\zeta}_A\left(t_n\right)$ is defined in \eqref{eq:Def-of-zeta-A} and the vector $\mats{\Psi}$ is given by

 \begin{equation}
   \label{eq:Def-of-Psi}
   \mat{\Psi}\coloneqq\left[ \begin{array}[c]{c  c c c}
      {\mat{u}_A^{(0)}(t_{n})}^{\top} &
                               \cdots&
                               {h^{m-1}\mat{u}_A^{(m-1)}(t_{n}) }^{\top} &
                                                                           \left(
                                                                           \begin{array}[c]{c}
                                                                             \sum_{i=0}^l\alpha_{i,m,l} h^i \mat{z}_A^{(i)}\left(t_{n-1}\right)\\
                                                                             +\matcal{O}_s\left(h^{l+m+1}\right)
                                                                           \end{array}
                                                                           \right)^{\top}
                             \end{array}
                           \right]^{\top}
                         \end{equation}

 Subtraction of \eqref{eq:System-of-zeta_n} from \eqref{eq:decoupled-discretized-differential-part-algeb}, and noting from \eqref{eq:past-time-step-is-accurate} and \eqref{eq:def-of-z-hat-n-1} that $\mathat{z}_{A,n-1}^{(i)}=\mat{z}_A^{(i)}\left(t_{n-1}\right)$, yields
 \begin{equation}
   \label{eq:2Algebraic-error}
   \mathats{\zeta}_{A,n}-\mats{\zeta}_A\left(t_n\right) = \mat{M}^{-1}\mats{\Delta}
 \end{equation}
 where
 \begin{equation}
   \label{eq:Def-delta}
   \mats{\Delta} = \left[\mat{0}, \,\cdots \,\mat{0},\, \matcal{O}_s\left(h^{l+m+1}\right)^{\top}\right]^{\top}
 \end{equation}
 The inverse of the matrix $\mat{M}$ can be expressed in block-structured format by partitioning it into four blocks,
 \begin{equation}
  \label{eq:block-partitionof-Mtilde}
  \mat{M} =
  \left[
    \begin{array}[c ]{c c}
      \mat{M}_{11} & \mat{M}_{12}\\
      \mat{M}_{21} & \mat{M}_{22}
    \end{array}
  \right]
\end{equation}
 where,
\begin{eqnarray*}
  \label{eq:M_11}
  \mat{M}_{11} &=&
  \left[
    \begin{array}[c]{c c c c c}
      \mat{I}_s & \frac{1}{h}\mat{N} & \mat{0} & \cdots & \mat{0}\\
      \mat{0} & \mat{I}_s & \frac{1}{h}\mat{N}& & \vdots\\
      \mat{0} &         & \ddots & &       \\
      \vdots  &         & \ddots & & \frac{1}{h}\mat{N}\\
      \mat{0} & \mat{0} & \cdots & & \mat{I}_s\\
    \end{array}
  \right],\quad
  \mat{M}_{12} =
  \left[
    \begin{array}[c]{c}
      \mat{0}\\
      \vdots \\
      \\
      \mat{0}\\
      \frac{1}{h}\mat{N}
    \end{array}
  \right]\\
   \mat{M}_{21} &=&
  \left[
    \begin{array}[c]{c c c }
      \alpha_{0,l,m}\mat{I}_s & \cdots  & (-1)^{m-1}\alpha_{m-1,l,m}\mat{I}_s \\
    \end{array}
  \right],\\
  \mat{M}_{22} &=& (-1)^m\alpha_{m,l,m}\mat{I}_s
\end{eqnarray*}
The inverse of the block-partitioned matrix (\ref{eq:block-partitionof-Mtilde}) is given by the block-partitioned matrix
\begin{equation}
     \label{eq:Inverse-block-partitioned}
     \mat{M}^{-1} =
     \left[
       \begin{array}[c]{c c}
         \mat{M}_{11}^{-1} + \mat{M}_{11}^{-1}\mat{M}_{12}\mat{S}_A^{-1}\mat{M}_{21}\mat{M}_{11}^{-1} & -\mat{M}_{11}^{-1}\mat{M}_{12}\mat{S}_A^{-1}\\
         \mat{S}_A^{-1}\mat{M}_{21}\mat{M}_{11}^{-1}                                                  &\mat{S}_A^{-1}
       \end{array}
     \right]
   \end{equation}
   with
   \begin{equation}
     \label{eq:S_A-appendix}
     \mat{S}_A = \mat{M}_{22} - \mat{M}_{21}\mat{M}_{11}^{-1}\mat{M}_{12}
   \end{equation}

   The error in algebraic component is therefore given by,
   \begin{equation}
     \label{eq:error-algebraic-comp}
     \mathats{\zeta}_{A,n}-\mats{\zeta}_A\left(t_n\right) =
     \left[
       \begin{array}[c]{c }
         -\mat{M}_{11}^{-1}\mat{M}_{12}\mat{S}_A^{-1}\\
         \mat{S}_A^{-1}
       \end{array}
     \right]\matcal{O}_s\left(h^{l+m+1}\right)
   \end{equation}

   Given that $\mat{M}_{11}$ is a block-upper diagonal matrix, with identity matrices on the diagonal blocks, its inverse is trivial and is given by
\begin{equation}
  {\small{
  \label{eq:inverse-of-M-11}
  \mat{M}_{11}^{-1} =
  \left[
     \begin{array}[c]{c c c c c c }
      \mat{I}_s & \frac{-1}{h}\mat{N}&  \frac{1}{h^2}\mat{N}^2&  \frac{-1}{h^3}\mat{N}^3  & \cdots &   \frac{(-1)^{m-1}}{h^{m-1}}\mat{N}^{m-1}\\
      \mat{0} & \mat{I}_s           &  \frac{-1}{h}\mat{N}    &  \frac{1}{h^2}\mat{N}^2  & \cdots &  \frac{(-1)^{m-2}}{h^{m-2}}\mat{N}^{m-2}\\
      \mat{0} &                    & \ddots                 &  \ddots                &        &                                \\
      \vdots  &                    &                        &       \ddots           &        &              \frac{-1}{h}\mat{N}\\
      \mat{0} & \mat{0}            & \cdots                 &                        &        & \mat{I}_s\\
    \end{array}
  \right]}}
\end{equation}
$\mat{S}_A$ is expanded as  
\begin{equation}
  \label{eq:S-A}
  \mat{S}_A = \sum_{i=0}^{m} \alpha_{i,l,m} (-1)^i  \left(\frac{\mat{N}}{h}\right)^{m-i}
\end{equation}
and its inverse $\mat{S}_A^{-1}$ can be expanded in a Taylor series format
\begin{equation}
  \label{eq:S-A-inverse}
  \mat{S}_A^{-1} = \sum_{p=0}^{\infty}\gamma_p   \left(\frac{\mat{N}}{h}\right)^p
\end{equation}
where $\gamma_p$ denote the sum of all the coefficients that appear in front of the terms with  $\left(\frac{\mat{N}}{h}\right)^p$.
Substituting \eqref{eq:S-A-inverse} and \eqref{eq:inverse-of-M-11} into \eqref{eq:error-algebraic-comp} and using the definition of $\mat{M}_{12}$ yields
  \begin{equation}
     \label{eq:error-algebraic-comp-expanded}
     \mathats{\zeta}_{A,n}-\mats{\zeta}_A\left(t_n\right) =
     \left[
       \begin{array}[c]{c }
         (-1)^{m}\sum_{p=0}^{\infty}\gamma_p   \left(\frac{\mat{N}}{h}\right)^{p+m}\\
         \vdots\\
         (-1)^{m-i}\sum_{p=0}^{\infty}\gamma_p   \left(\frac{\mat{N}}{h}\right)^{p+m-i}\\
         \vdots\\
         -1\sum_{p=0}^{\infty}\gamma_p   \left(\frac{\mat{N}}{h}\right)^{p+1}\\
       \sum_{p=0}^{\infty}\gamma_p   \left(\frac{\mat{N}}{h}\right)^p
       \end{array}
     \right]\matcal{O}_s\left(h^{l+m+1}\right)
   \end{equation}

   Let us now assume that the differentiation index of the DAE system is $k$. Thus from Theorem \ref{them:diff-index-is-nilpotency-index} the matrix $\mat{N}$ is nilpotent with nilpotency index $k$. Therefore, $\mat{N}^{q}=\mat{0}$ for $q \geq k$. This fact makes the algebraic component of the error in the $i$-th order derivative, (that is $\mathat{z}_{A,n}^{(i)}-\mat{z}_A^{(i)}(t_n), i=0,\cdots,m$), vanish completely if $m-i\geq k$. On the other hand, the case  $m-i < k$  entails truncating the infinite series in \eqref{eq:error-algebraic-comp-expanded} (in accordance with $\mat{N}^{q}=\mat{0}$ for $q\geq k$) at $p=k-m+i-1$ without loss of accuracy. Thus we have,
  \begin{equation}
     \label{eq:error-algebraic-comp-expanded-2}
     \mathats{\zeta}_{A,n}-\mats{\zeta}_A\left(t_n\right) =
     \left[
       \begin{array}[c]{c }
         (-1)^{m}\sum_{p=0}^{k-m-1}\gamma_p   \left(\frac{\mat{N}}{h}\right)^{p+m}\\
         \vdots\\
         (-1)^{m-i}\sum_{p=0}^{k-m+i-1}\gamma_p   \left(\frac{\mat{N}}{h}\right)^{p+m-i}\\
         \vdots\\
         -1\sum_{p=0}^{k-2}\gamma_p   \left(\frac{\mat{N}}{h}\right)^{p+1}\\
         \sum_{p=0}^{k-1
         }\gamma_p   \left(\frac{\mat{N}}{h}\right)^p
       \end{array}
     \right]\matcal{O}_s\left(h^{l+m+1}\right)
   \end{equation}
   The above expression enable deducing the order of the error through finding the \textit{smallest positive} integer that appears on the power of $h$ in each component. For example, in the first component the smallest positive power of $h$ appears at $p=k-m-1$, giving rise to an order of $l+m+2-k$ Using analogous reasoning leads to the following result
   \begin{equation}
     \label{eq:error-algebraic-comp-expanded-4}
     \mathat{z}_{A,n}^{(i)}-\mat{z}_A^{(i)}(t_n) =
     \left\lbrace
       \begin{array}[c]{c l }
         \matcal{O}_s\left(h^{l+m+2 \,- k}\right) &\mathrm{if}\, m-i <k\\
         \mat{0}&   \mathrm{if}\, m-i\geq k
       \end{array}
     \right.
   \end{equation}
   
   \subsubsection{The Differential Error Component}
   \label{sec:diff-error-comp}
   This part of the error analysis also lays out the proof of Lemma \ref{lemma:order-obreshk-form--ODE} since the system of \eqref{eq:the decoupled MNA-diff-part} is in the ODE form, and its error resembles the error of a general ODE addressed in that lemma.

   The error in the differential component can be derived in a similar manner to the algebraic component. Starting with \eqref{eq:proof-theorem-1} and \eqref{eq:Obreshkov-order-applied}, replacing $\mat{z}_A(t)$ for $\mat{z}_D(t)$, yields the following
   \begin{equation}
     \label{eq:Differential-error-component}
     \mathats{\zeta}_{D,n}-\mats{\zeta}_D\left(t_n\right) = \mat{K}^{-1} \mats{\Delta}
   \end{equation}
   where $\mat{K}$ is given by \eqref{eq:def of K } and $\mathats{\zeta}_{D,n}$ and $\mats{\zeta}_{D}\left(t_n\right)$ are, respectively, defined in~\eqref{eq:Augmented-transformed} and \eqref{eq:Def-of-zeta-D}. A process similar to the process of deriving the algebraic error can be followed if, in the matrix $\mat{M}$,  $\mat{N}$ is replaced with $\mat{I}_r$ above the diagonal, $\mat{I}_s$ is replaced with $\mat{J}$ on the main diagonal and $\mat{I}_s$ is replaced by $\mat{I}_r$ on the last block of rows. Using the formula derived above for the inverse of a $2 \times 2$ block-partitioned matrix, the error in the diferential component results in
   \begin{equation}
     \label{eq:Differential-error-component-1}
     \mathats{\zeta}_{D,n}-\mats{\zeta}_D\left(t_n\right) =
     \left[
       \begin{array}[c]{c}
         \frac{h^m}{h^{m\phantom{-1}}}\left\lgroup\sum_{i=0}^{m}(-1)^i\alpha_{i,l,m}h^i\left(\mat{J}^{-1}\right)^{m-i}\right\rgroup^{-1}\mat{J}\\
         \frac{h^m}{h^{m-1}}\left\lgroup\sum_{i=0}^{m}(-1)^i\alpha_{i,l,m}h^i\left(\mat{J}^{-1}\right)^{m-i}\right\rgroup^{-1}\mat{J}\\
         \vdots\\
          \frac{h^m}{h^{0\phantom{-1}}}\left\lgroup\sum_{i=0}^{m}(-1)^i\alpha_{i,l,m}h^i\left(\mat{J}^{-1}\right)^{m-i}\right\rgroup^{-1}\mat{J}
       \end{array}
     \right]
     \matcal{O}_r\left(h^{l+m+1}\right)
   \end{equation}
   The above expressions can be used to deduce that the order of convergence in the $i^{\mathrm{th}}$ order derivative will be given by
   \begin{equation}
     \label{eq:Oder-in-in-differential}
     \mathat{z}_{D,n}^{(i)} - \mat{z}_D^{(i)}(t_n) =\matcal{O}_r\left(h^{l+m+1+i}\right) 
   \end{equation}
   \subsection{Main Result}
   \label{sec:final-theorem}
   The preceding analysis enables establishing the order of the Obreshkov method in a general DAE using the following theorem.
   \begin{theorem}
     \label{them:DAE-theorem-1}
     Let the DAE system in \eqref{eq:The-DAE} be given with a differentiation index $k >0$ and assume that the Obreshkov method with parameters $l,m$ is used to approximate $\mat{x}(t)$. Furthermore assume that $\mat{x}(t)$ and $\frac{\mathrm{d}^i\mat{x}(t)}{\mathrm{d}t^i}$ $i=0,1,\cdots,l$ are readily available at $t=t_{n-1}$ and have been assigned to $\mathat{x}^{(i)}_{n-1},i=0,\cdots,l$. It then follows that the approximations $\mathat{x}_n^{(i)}$ converge asymptotically to $\frac{\mathrm{d}^i\mat{x}(t)}{\mathrm{d}t^i}$ at $t=t_n$ with the following order
     \begin{equation}
       \label{eq:Oder-theorem-DAE}
      h^i \mathat{x}_n^{(i) } - h^i\mat{x}^{(i)}(t_n) = \left\lbrace
         \begin{array}[c]{l c}
           \matcal{O}_N\left(h^{l+m+2-k}\right) & \mathrm{if}\, m-i <k \\
            \matcal{O}_N\left(h^{l+m+1+i}\right) & \mathrm{if}\, m-i \geq k
         \end{array}
         \right.
       \end{equation}
   \end{theorem}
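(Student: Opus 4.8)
The plan is to assemble the result from the two error components already isolated by the Weierstrass change of variables in \eqref{eq:difference-between-xi-and-xi-hat}. The left-hand side of \eqref{eq:Oder-theorem-DAE} is precisely the $i$-th block of $\mathats{\xi}_n-\mats{\xi}(t_n)$, and that identity writes this block as $\mat{Q}_D$ times the $i$-th block of $\mathats{\zeta}_{D,n}-\mats{\zeta}_D(t_n)$ plus $\mat{Q}_A$ times the $i$-th block of $\mathats{\zeta}_{A,n}-\mats{\zeta}_A(t_n)$. Because $\mat{Q}_D$ and $\mat{Q}_A$ are constant, step-independent matrices, premultiplication by them preserves the order in $h$; hence the order of each block of $\mathats{\xi}_n-\mats{\xi}(t_n)$ is the \emph{minimum} of the orders of the corresponding differential and algebraic blocks. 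So I would first characterize each of those two blocks separately and then keep the smaller exponent.

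For the differential component I would exploit that the decoupled subsystem \eqref{eq:the decoupled MNA-diff-part} is a genuine ODE, so its error is exactly the situation of Lemma~\ref{lemma:order-obreshk-form--ODE}. Repeating the manipulation used for the algebraic block, namely applying \eqref{eq:proof-theorem-1} and Lemma~\ref{lemma:Obreshkov-Order} with $\mat{z}_A$ replaced by $\mat{z}_D$ and using the exactness assumption \eqref{eq:past-time-step-is-accurate} to annihilate all but the last block of the right-hand side, gives $\mathats{\zeta}_{D,n}-\mats{\zeta}_D(t_n)=\mat{K}^{-1}\mats{\Delta}$ with $\mats{\Delta}$ as in \eqref{eq:Def-delta}. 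The key point is that the differential Schur complement is built from the finite-eigenvalue block $\mat{J}$ rather than from a nilpotent matrix, so its inverse is $\mathcal{O}(h^0)$ and no negative powers of $h$ arise; reading off \eqref{eq:Differential-error-component-1} then yields that the (scaled) $i$-th block is $\matcal{O}_r\!\left(h^{l+m+1+i}\right)$, which is \eqref{eq:Oder-in-in-differential}.

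For the algebraic component I would follow Section~\ref{sec:algebr-comp-error}: write $\mathats{\zeta}_{A,n}-\mats{\zeta}_A(t_n)=\mat{M}^{-1}\mats{\Delta}$, apply the $2\times2$ block-inverse formula \eqref{eq:Inverse-block-partitioned}, and expand the Schur-complement inverse $\mat{S}_A^{-1}$ as the series \eqref{eq:S-A-inverse} in powers of $\mat{N}/h$. Here I would invoke Theorem~\ref{them:diff-index-is-nilpotency-index} to identify the nilpotency index of $\mat{N}$ with the differentiation index $k$, so that $\mat{N}^{q}=\mat{0}$ for $q\ge k$ truncates every series to \eqref{eq:error-algebraic-comp-expanded-2}. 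Tracking, block by block, the smallest power of $h$ that survives the truncation then gives \eqref{eq:error-algebraic-comp-expanded-4}: the block vanishes identically when $m-i\ge k$, and otherwise its leading order is $\matcal{O}_s\!\left(h^{l+m+2-k}\right)$, the critical exponent $l+m+2-k$ coming from the largest admissible index $p=k-m+i-1$ and being independent of $i$.

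Combining the two, when $m-i\ge k$ the algebraic block is exactly zero, so the $i$-th block of $\mathats{\xi}_n-\mats{\xi}(t_n)$ inherits the differential order $\matcal{O}_N\!\left(h^{l+m+1+i}\right)$; when $m-i<k$ both blocks are present, and I would compare the exponents $l+m+1+i$ and $l+m+2-k$. Their difference is $i+k-1\ge0$ (since $i\ge0$ and $k\ge1$), so the algebraic term always dominates and the block is $\matcal{O}_N\!\left(h^{l+m+2-k}\right)$, reproducing \eqref{eq:Oder-theorem-DAE}. I expect the algebraic component to be the main obstacle. First, one must justify that $\mat{S}_A$ is genuinely invertible with a terminating inverse: $\mat{S}_A$ is the sum of the nonsingular, step-independent matrix $(-1)^m\alpha_{m,l,m}\mat{I}_s$ and a polynomial in $\mat{N}/h$ with zero constant term, hence nilpotent, so \eqref{eq:S-A-inverse} terminates with $\gamma_0=(-1)^m/\alpha_{m,l,m}\neq0$ despite the negative powers of $h$ carried by the intermediate terms. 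Second, and more delicately, to claim that the index \emph{genuinely} lowers the order (and not that the bound is merely an upper estimate) one must verify that the leading surviving term in each block is truly of order $h^{l+m+2-k}$ and does not accidentally cancel, i.e. that the relevant coefficient multiplying $\mat{N}^{\,k-1}\neq\mat{0}$ is nonzero; establishing this non-degeneracy, rather than the bookkeeping of powers of $h$, is the delicate step.
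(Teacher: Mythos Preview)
Your proposal is correct and follows essentially the same route as the paper: split the error via \eqref{eq:difference-between-xi-and-xi-hat}, invoke \eqref{eq:Oder-in-in-differential} and \eqref{eq:error-algebraic-comp-expanded-4} for the two blocks, and keep the smaller exponent. Your explicit comparison $(l+m+1+i)-(l+m+2-k)=i+k-1\ge 0$ spells out what the paper leaves implicit in ``dominant component'', and your closing remark on non-degeneracy of the $\mat{N}^{k-1}$ coefficient goes beyond the upper-bound statement actually asserted in \eqref{eq:Oder-theorem-DAE}.
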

   \begin{proof}
     Substituting \eqref{eq:error-algebraic-comp-expanded-2}-\eqref{eq:error-algebraic-comp-expanded-4} and \eqref{eq:Differential-error-component}-\eqref{eq:Differential-error-component-1} into \eqref{eq:difference-between-xi-and-xi-hat}, letting $h\rightarrow 0$ and observing that the dominant component of the error $\mathats{\xi}_n-\mats{\xi}(t_n)$ is associated with the smallest positive power of $h$ proves the above theorem.
   \end{proof}
   \begin{remark}
     \label{rem:main-result}
     The local order of convergence obtained from the Obreshkov method with parameters $l,m$ when applied in DAE with index $k$ matches the same order convergence if the method is applied in ODE, if and only if $m\geq k$. Equivalently put, the Obreshkov method suffers order reduction if it is used in DAE with differentiation index $k > m$. 
   \end{remark}
   \section{Experimental Validation}
   \label{sec:exper-valid}

   Numerical validation of the theoretical results presented above requires problems where the exact solution (solution free from truncation error) of the DAE is obtainable. This is generally difficult since problems modelled by systems of DAE do not have their analytical solutions readily available. Fortunately, this problem can be handled in the domain of circuit simulation using the following steps.
   \begin{itemize}
   \item The circuit is excited by sinusoidal sources. In this case the source vector is given in the form $\mat{b}(t)=\mat{b}_c\cos\left(\omega t\right) + \mat{b}_s\sin\left(\omega t\right) $, with $\omega$ denoting the radial frequency in rad/sec and $\mat{b}_c$ and $\mat{b}_s$ are constant vectors.
   \item With sinusoidal stimulus at the input, the circuit response at steady-state ($t\rightarrow \infty$) settles down to a sinusoidal waveform represented by $\mat{x}_{\mathrm{ss}}(t)\coloneqq \mat{X}_c\cos\left(\omega t\right) + \mat{X}_s\sin\left(\omega t\right) $, where $\mat{X}_c$ and $\mat{X}_S$ are constant vectors that can be computed using the AC analysis method \cite{vlach-chapter-10}. This approach for computing the response of the circuit, or the solution of the DAE that model circuit formulation, in steady-state is indeed free from the truncation error that is associated with the methods that solve the DAE as an IVP. Therefore, it can be used as the accurate reference against which results from any such IVP methods can be compared. 
   \item Next, the initial value, $\mathat{x}_0$, used in starting the Obreshkov method is taken from an arbitrary point in the periodical trajectory traced by $\mat{x}_{\mathrm{ss}}(t)$. For example, the point $t=0$ is possible choice. Hence, $\mathat{x}_0$ can be assigned the value of $\mat{x}(0)=\mat{X}_C$. In a similar manner the initial values of the derivatives can also be assigned from the derivatives of the steady-state response $\mat{x}_{\mathrm{ss}}(t)$ at $t=0$. Thus, $\mathat{x}_0^{(1)} = \omega\mat{X}_S$, $\mathat{x}_0^{(2)} = -\omega^2\mat{X}_C$, and so forth.
   \item Running the Obreshkov method using specified values for parameters $l,m$, and starting with the point computed in the previous step should generate a sequence of points $\mathat{x}_n$ that approximate $\mat{x}_{\mathrm{ss}}(t)$. In fact, $\mathat{x}_1-\mat{x}_{\mathrm{ss}}(h)$, for sufficiently small values of $h$, should asymptotically approach $\matcal{O}(h^q)$ where $q$ is the power described by Theorem \ref{them:DAE-theorem-1} in \eqref{eq:Oder-theorem-DAE}. Thus, the validation of Theorem \ref{them:DAE-theorem-1} can be carried out by examining the behaviour of the error $||\mathat{x}_1-\mat{x}_{\mathrm{ss}}(h)||$ versus $h$.
     \item In order to clearly display the results, the error $||\mathat{x}_1-\mat{x}_{\mathrm{ss}}(h)||$ will be plotted on log-scaled graph versus the values of $h$. Naturally, if the error asymptotically approaches $h^q$ (as it should) then the log-plot will demonstrate a linear behaviour whose slope\footnote{Slope on a log scale graph is defined as the number of decades of increase/decrease in error $\mathat{x}_1-\mat{x}_{\mathrm{ss}}(h)$ per one decade change in $h$} should match the value described by \eqref{eq:Oder-theorem-DAE} in accordance with the values of $l,m$ and the index of the DAE $k$.
     
     \end{itemize}

     The above steps were executed on the three circuits shown in Figure \ref{fig:circuits}. The differentiation index of those circuits are, from left to right, $k=1,2,3$, respectively. The determination of the indices of these circuits was done using the procedure described by \cite{Gerdin:04b} to compute the matrices $\mat{P}$  and $\mat{Q}$ of the Weierstrass transformation. The $\mat{N}$ matrix resulting from the transformation was used to determine the differentiation index $k$ (using $\mat{N}^k=\mat{0}$) of the DAE modelling each circuit.
   \begin{figure}[htbp!]
     \centering
     \includegraphics[width=0.99\textwidth]{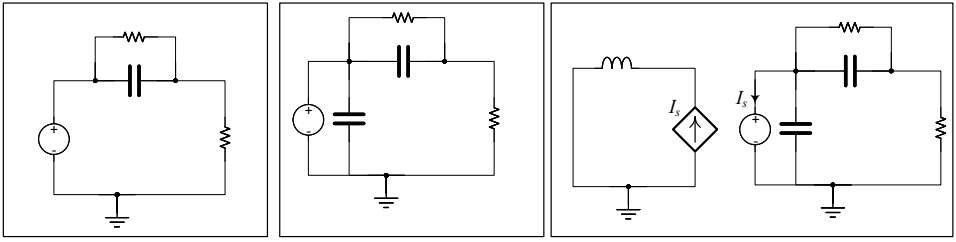}
     \caption{Circuits used in the numerical validation of the theoretical results. All resistors are equal $1\Omega$, capacitors  1F and inductors are 1H. Independent voltage source is $\cos(2\pi t) +\sin(2 \pi t)$. }
     \label{fig:circuits}
   \end{figure}

   The plots in Figure \ref{fig:affinity} display the error $||h^i\mathat{x}_1^{(i)}-h^i\mat{x}_{\mathrm{ss}}^{(i)}(h)||$ for $i=0, 1$ versus values of $h$ logarithmically distributed within one decade. Each plot indicates the values of $l,m$ used with the Obreshkov method and the differentiation index $k$ of the DAE system. The plots also highlight the slopes observed in each line. As shown, the slope in each case matches the order predicted by Theorem \ref{them:DAE-theorem-1} given the values of $l,m$ and $k$.
   \begin{figure}[t]
\setlength{\abovecaptionskip}{-5pt}
\setlength{\belowcaptionskip}{-5pt}
\addtolength{\subfigcapskip}{3pt}

\centering
\begin{tabular}{cc}
\subfigure[$l=0,m=2,k=2$]{\includegraphics[scale=0.36]{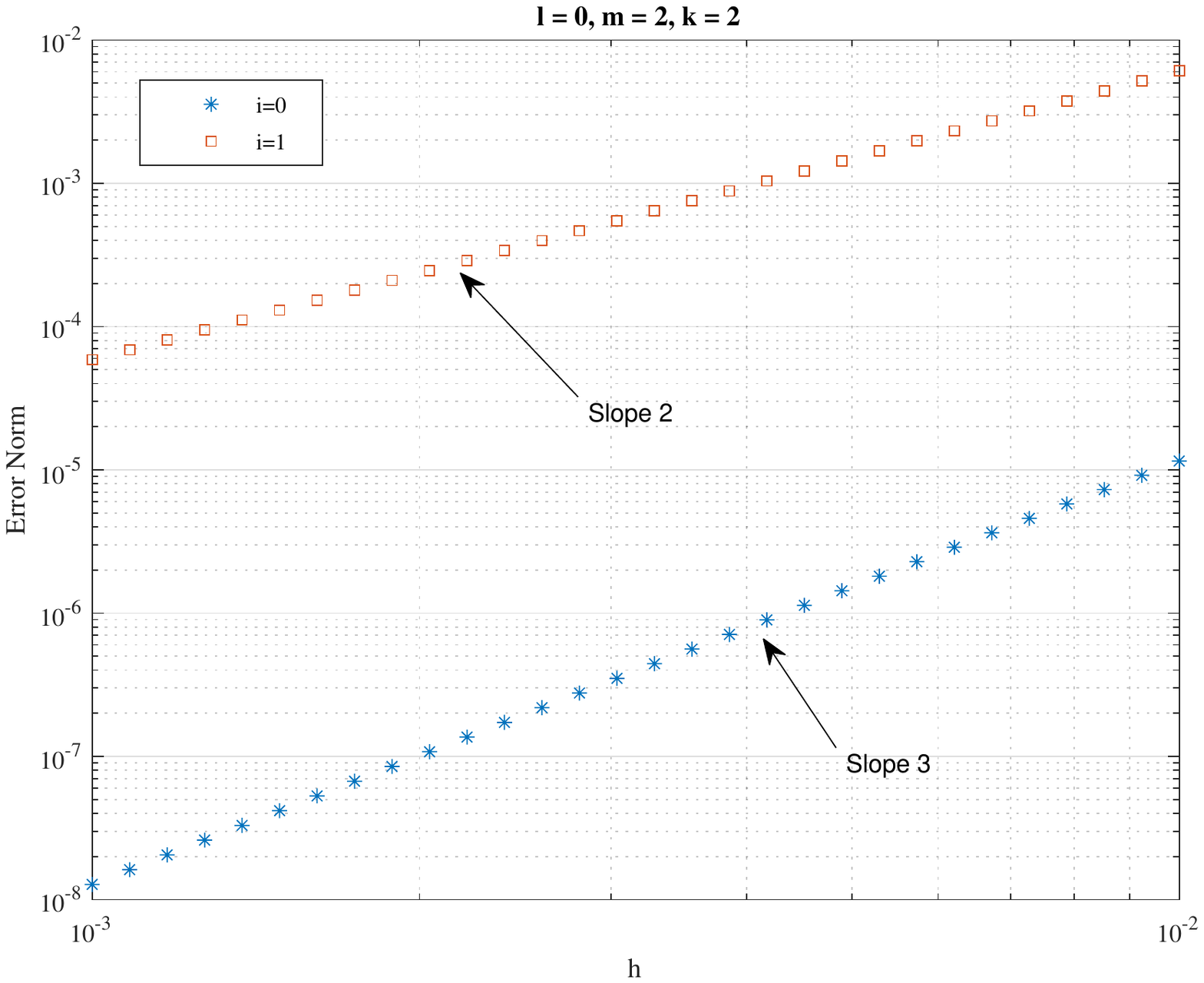}
\label{fig:img_pair_1}} & 
{\subfigure[$l=1,m=2,k=3$]{\includegraphics[scale=0.36]{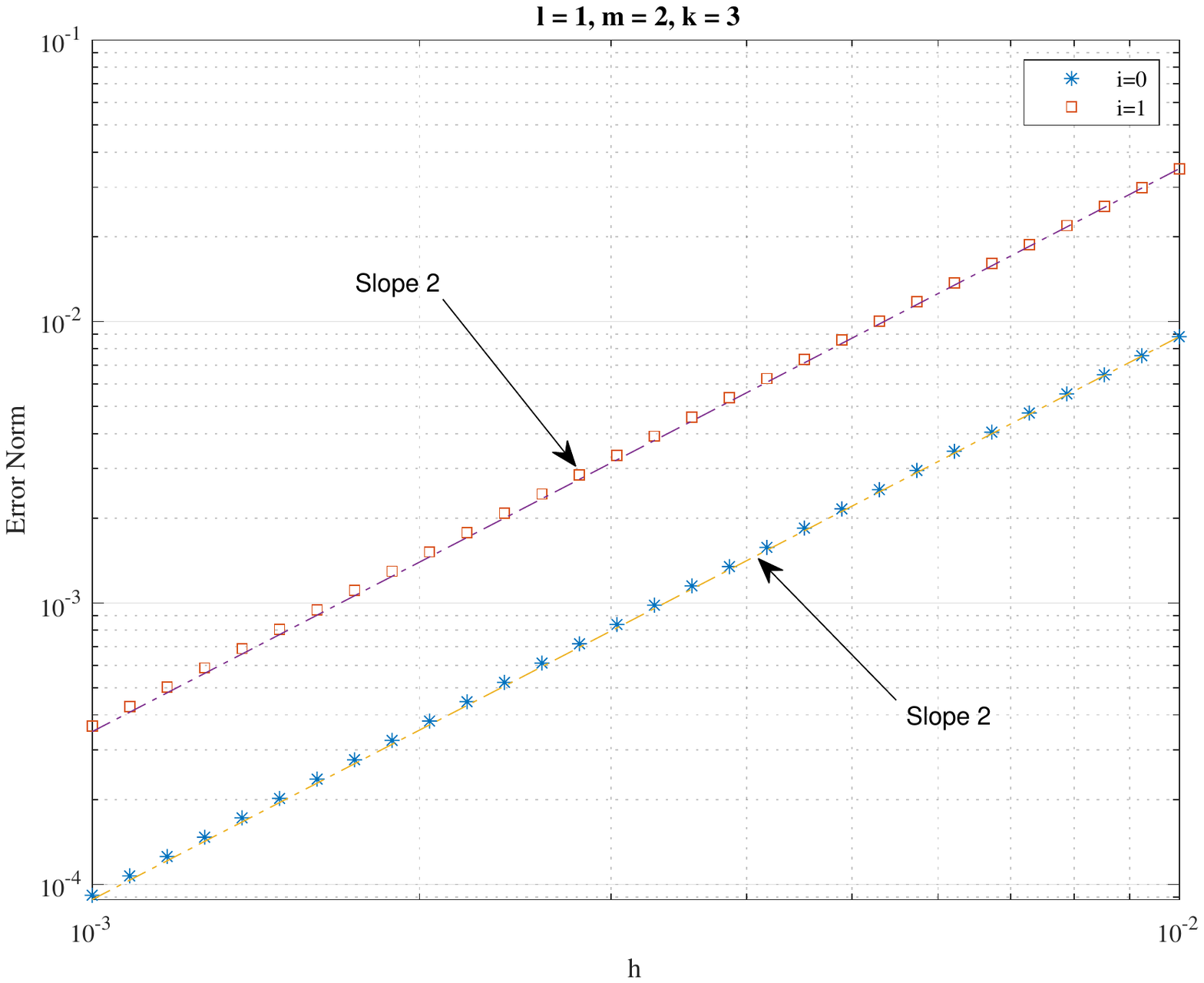}    
\label{fig:img_complete_1}}} \\
{\subfigure[$l=1,m=3,k=1$]{\includegraphics[scale=0.36]{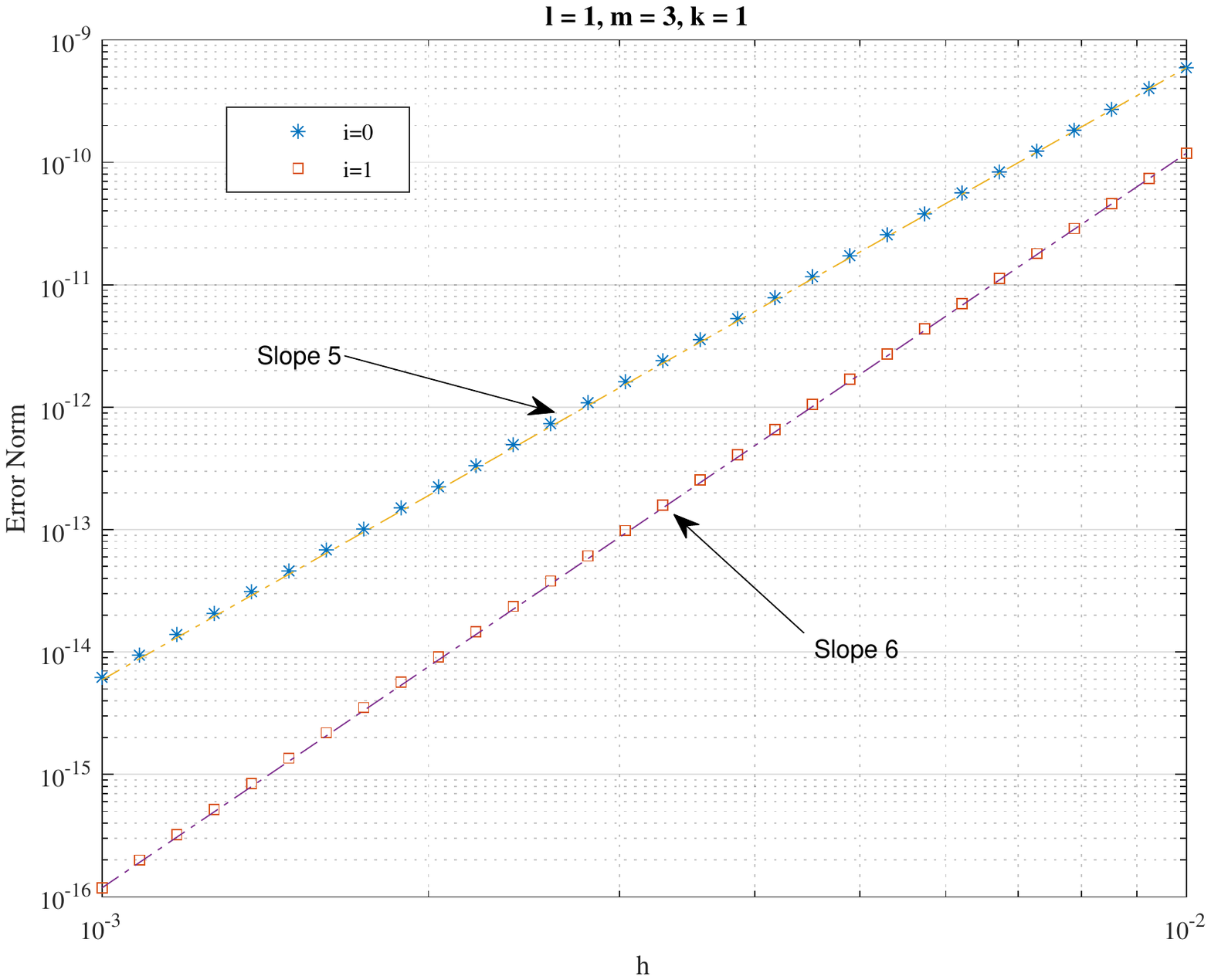}
\label{fig:img_complete_2}}} &
\subfigure[$l=1,m=3,k=3$]{\includegraphics[scale=0.36]{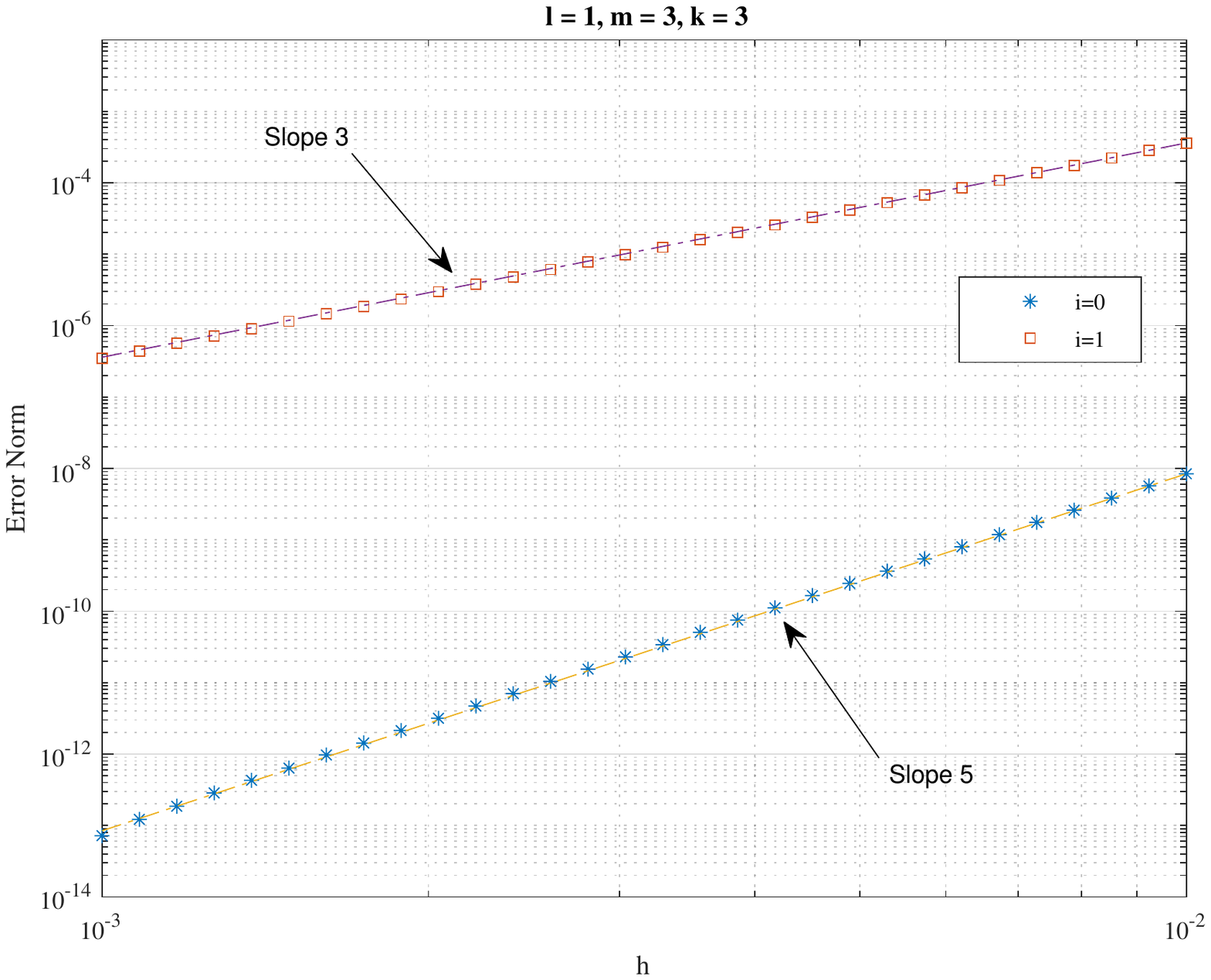}
\label{fig:img_pair_2}}\\
\end{tabular}
\vspace{10pt}
\caption[]{Log graph for the error $||\mathat{x}_1^{(i)}-\mat{x}_{\mathrm{ss}}^{(i)}(h)||$ at $i = 0,1$ versus the step size $h$. The highlighted slopes refer to the number of decades where the error drops within one decade of variation in $h$. The slopes match the orders predicted by theorem \ref{them:DAE-theorem-1} given the corresponding values of $l,m$ and $k$.}
\label{fig:affinity}
\vspace{-10pt}
\end{figure}

Worthy of observation on Figure \ref{fig:affinity} is behaviour of the error in Figures \ref{fig:img_complete_1} compared with that in \ref{fig:img_pair_2}. Those two panels show the method used for the same circuit (the one with $k=3$) but with different values for $m$. What needs to be noted here is the increase in the order from 2 in the former to 5 in the latter, which underscores the order reduction phenomena mentioned in Remark \ref{rem:main-result}.

\section{Conclusions}
\label{sec:conclusion}
This paper presented a novel theoretical result characterizing the local order of convergence of the recently proposed high-order $A$- and $L$-stable based on the Obreshkov formula. The main focus of this work has been the derivation of this order of convergence when the system of differential equations takes the form of differential-algebraic equations (DAE). The derived results showed that this order may be different from the order of convergence the in the ordinary differential equations (ODE) and for certain configurations of the Obreshkov formula. Theoretical results have been validated with careful numerical simulation of several circuits.

\end{document}